\newtheorem{corollary}{\bf Corollary}
\newtheorem{definition}{\bf Definition}
\newtheorem{lemma}{\bf Lemma}
\newtheorem{proposition}{\bf Proposition}
\newtheorem{remark}{Remark}
\newtheorem{theorem}{\bf Theorem}
\theoremstyle{definition}
\newtheorem{example}{\bf Example}
\numberwithin{equation}{section}
\begin{document}
	
	\title[Static perfect fluid space-time]{Geometry of static perfect fluid space-time}
	\author{J. Costa, R. Di\'ogenes, N. Pinheiro}
	\author{E. Ribeiro Jr}

\address[J. Costa]{Universidade Federal do Cear\'a, Departamento de Matem\'atica. Campus do Pici, 60355-636 - Fortaleza, CE - Brazil.}\email{johnatansc@alu.ufc.br }	
	
\address[R. Di\'ogenes]{UNILAB, Instituto de Ci\^encias Exatas e da Natureza, Rua Jos\'e Franco de Oliveira, 62790-970, Reden\c{c}\~ao - CE, Brazil.}\email{rafaeldiogenes@unilab.edu.br}
	
\address[N. Pinheiro]{Universidade Federal do Amazonas, Instituto de Ci\^encias Exatas - ICE, Departamento de Matem\'atica. Campus Coroado, 69080900 - Manaus / AM, Brazil.}\email{neilha@ufam.edu.br}

\address[E. Ribeiro Jr]{Departamento  de Matem\'atica, Universidade Federal do Cear\'a - UFC, Campus do Pici, 60455-760, Fortaleza - CE, Brazil.}
\email{ernani@mat.ufc.br}

\thanks{J. Costa was partially supported by CAPES/Brazil - Finance Code 001.}	
\thanks{R. Di\'ogenes was partially supported by CNPq/Brazil [Grant: 310680/2021-2]}
	\thanks{E. Ribeiro was partially supported by CNPq/Brazil [Grant: 309663/2021-0 \& 403344/2021-2], CAPES/Brazil and FUNCAP/Brazil [Grant: PS1-0186-00258.01.00/21].}

\thanks{Corresponding Author: E. Ribeiro Jr (ernani@mat.ufc.br)}

\begin{abstract}
In this article, we investigate the geometry of static perfect fluid space-time on compact manifolds with boundary. We use the generalized Reilly’s formula to establish a geometric inequality for a static perfect fluid space-time involving the area of the boundary and its volume. Moreover, we obtain new boundary estimates for static perfect fluid space-time. One of the boundary estimates is obtained in terms of the Brown-York mass and ano\-ther one related to the first eigenvalue of the Jacobi operator. In addition, we provide a new (simply connected) counterexample to the Cosmic no-hair conjecture for arbitrary dimension $n\geq 4.$ 
\end{abstract}
	
\date{\today}
	
\keywords{Static metrics; perfect fluid; geometric inequalities; compact manifolds with boundary; boundary estimates.}  \subjclass[2020]{Primary 53C25, 53C20, 53E20}
	
\maketitle
	
\section{Introduction}\label{SecInt}

The static space-times are special solutions to Einstein equations in general re\-la\-tivity and they play a fundamental role in modern physics and geometry. To be precise, given a Riemannian manifold $(M^n,\,g),$ $n\ge 3,$ and a positive smooth function $f$ on $M^n,$ we say that $(\widehat{M}^{n+1},\widehat{g})=M^n\times_f \mathbb{R}$ endowed with the metric $\widehat{g}=g-f^2 dt^2$ is a static space-time. In this situation, the Einstein equation with perfect fluid as a matter field over $(\widehat{M}^{n+1},\widehat{g})=M^n\times_f \mathbb{R}$ is given by
\begin{equation}\label{eqi1}
    Ric_{\widehat{g}}-\frac{R_{\widehat{g}}}{2}\widehat{g}=T,
\end{equation} where $T=\mu f^2 dt^2+\rho g$ is the stress-energy-momentum tensor of a perfect fluid, $Ric_{\widehat{g}}$ and $R_{\widehat{g}}$ stand for the Ricci tensor and the scalar curvature for the metric $\widehat{g},$ respectively. Furthermore, the smooth functions $\mu$ and $\rho$ are \textit{mass-energy density} and \textit{pressure} of the fluid (as measured in the rest frame). The fluid is called ``perfect" because of the absence of heat conduction terms and stress terms corresponding to viscosity. For more details, see, e.g., \cite{CLR,SPFS,Kobayashi,Kobayashi-Obata,Oliynyk,Wald}.

The perfect fluid space-times are natural generalizations of the static vacuum spaces and certain solutions of \eqref{eqi1} provide models for galaxies, black holes and stars (see \cite{Hawking, Oliynyk, Wald}). In particular, they are used in developing realistic stellar models (or models for fluid planets) and represent a homogeneous fluid filled universe that is undergoing accelerated expansion. Astronomical evidences also indicate that the universe can be modeled as a space-time containing a perfect fluid; see \cite{CLR,SPFS,Martin,Wald} and the references therein. 

In order to make our approach more comprehensible, we fix some terminology (cf. \cite{SPFS,Kobayashi-Obata,Shen-FM}). 

\begin{definition}\label{def-perfect-fluid}
A Riemannian manifold $\big(M^n,\,g\big)$ is said to be a spatial factor of a static perfect fluid space-time if there exist smooth functions $f>0$ and $\rho$ on $M^n$ satisfying

\begin{eqnarray}\label{eq-general}
f\mathring{Ric} = \mathring{\nabla}^2 f
\end{eqnarray}
and
\begin{eqnarray}\label{eq-laplacian}
\Delta f=\left(\frac{n-2}{2(n-1)}R+\frac{n}{n-1}\rho\right)f,
\end{eqnarray} where $\mathring{Ric}$, $\mathring{\nabla}^2 f$ and $R$ stand for the traceless Ricci, traceless Hessian of $f$ and scalar curvature of $(M^n,\,g),$ respectively. When $M^n$ has non-empty boundary $\partial M,$ we assume in addition that $f^{-1}(0)=\partial M.$ The function $f$ is usually called lapse function or static potential in the literature. In this case, $(M^n,\,g,\,f,\,\rho)$ will be called {\it static perfect fluid space-time} (SPFST). 
\end{definition}

One should be emphasized that the \textit{dominant energy condition} is said to be satisfied when $\mu\geq|\rho|,$ which means that the speed of the energy flow can not be equal or greater than to the light. As was observed by Hawking and Ellis \cite{Hawking}, the dominant energy condition holds for all known matter; see also \cite[ p. 347]{O'Neil} and \cite[p. 219]{Wald}. In geometrical point of view, Eq. \eqref{eq-general} is related to important special metrics, as for instance, vacuum static spaces (see \cite{Ambrozio,BS,SPFS,Shen-FM}), Miao-Tam critical metrics or $V$-static spaces (see \cite{Baltazar-Diogenes-Ribeiro,BDR,Batista-Diogenes-Ranieri-Ribeiro,Corvino-Eichmair-Miao,Miao-Tam,Yuan2}) and Einstein-type manifolds (see \cite{Besse,CSW,hpw1,hpw2,kk,compact}). To be precise, $V$-static spaces are critical metrics of the volume functional on a given manifold $M^n$ when restricted to the class of metrics with prescribed constant scalar curvature for a prescribed Riemannian metric on the boundary $\partial M.$ One easily verifies from \eqref{eq-general} and \eqref{eq-laplacian} that $\mu=\frac{R}{2}.$ Moreover, as pointed out by Coutinho \textit{et al.} \cite[Proposition 2]{SPFS}, in contrast with static spaces and $V$-static spaces, the static perfect fluid space-time equation alone does not imply that the scalar curvature is constant (see \cite{BLP,Masood}).

A classical example of SPFST with connected (non-empty) boundary is the $n$-dimensional hemisphere $\mathbb{S}^{n}_{+}(r)$ of radius $r$ endowed with the standard metric $g_{can}$ and potential function $f(h)=\cos(h)$, where $h\leq\frac{\pi}{2}$ is the height function. In this case, $\partial M=\mathbb{S}^{n-1}(r).$ For the case of disconnected boundary, we have $[0,\pi]\times\mathbb{S}^{n-1}.$ To be precise, let $M=[0,\pi]\times\mathbb{S}^{n-1}$ be a Riemannian product with metric $g=dt^2+(n-2)g_{_{\mathbb{S}^{n-1}}}$ and potential function $f(t)=\sin(t).$ Thus, $M$ is a compact oriented static perfect fluid space-time with disconnected boundary (the boundary is the union of two copies of $\mathbb{S}^{n-1}$). For more examples, see, e.g., \cite{BLP,Masood}.

It has been conjectured in 1984 by Boucher, Gibbons and Horowitz in \cite{BGH,BG} that: {\it the hemisphere $\mathbb{S}^{n}_{+}$ is the only possible $n$-dimensional (simply connected) positive static triple with single-horizon (connected boundary) and positive scalar curvature.} This conjecture is known as {\it Cosmic no-hair conjecture}. It is closely related to Fischer-Marsden conjecture which asserts that the standard unit round spheres $(\mathbb{S}^n,\,g_{can})$ are the only closed Riemannian manifolds with scalar curvature $n(n-1)$ admitting static potential (see \cite{Ambrozio,Shen-FM}).  
In the last decades some partial answers to the Cosmic no-hair conjecture were obtained. For instance, by assuming that $(M^{n},\,g)$ is Einstein, it suffices to apply the Obata type theorem due to Reilly \cite{Reilly1977} to conclude that the conjecture is true. Moreover, Kobayashi \cite{Kobayashi} and Lafontaine \cite{lafontaine} proved independently that such a conjecture is also true under conformally flat condition. Qing and Yuan \cite{jiewei} proved the Cosmic no-hair conjecture by considering a weaker hypothesis on the Cotton tensor. In the work \cite{GHP}, Gibbons, Hartnoll and Pope constructed counterexamples to the Cosmic no-hair conjecture in the cases of dimension $4\leq n\leq 8,$ but their counterexamples are not simply connected.

In the sequel, we provide a simply connected counterexample to the Cosmic no-hair conjecture for arbitrary dimension $n\ge 4.$

\begin{example}
\label{exA}
Let $M^n=\mathbb{S}^{p+1}_{+}\times\mathbb{S}^q$, $q>1$, with the doubly warped product metric 
	\begin{eqnarray*}
		g=dr^2+\sin^2(r)g_{\mathbb{S}^p}+\frac{q-1}{p+1}g_{\mathbb{S}^q},
	\end{eqnarray*} where $r(x,y)=r(x)$ is the height function of $\mathbb{S}^{p+1}_{+}$. By considering the potential function $f(r)=\cos(r)$ and  $r\leq\frac{\pi}{2},$ one obtains that $(M^n,\,g)$ must satisfy the Eqs. \eqref{eq-general} and \eqref{eq-laplacian}. 
		\end{example}

\begin{remark}
The above example has positive constant scalar curvature and hence, it is clearly a positive static triple. Besides, it is simply connected. Example \ref{exA} is inspired in a new example of $m$-quasi-Einstein manifold with boundary obtained by Ribeiro et al. in \cite{DGRPAMS}. A detailed description of Example \ref{exA} will be presented in Section \ref{Sec2}. 
\end{remark}

Geometric inequalities are classical objects of study in geometry and physics. In the recent years, boundary and volume estimates, as for example, isoperimetric and Shen-Boucher-Gibbons-Horowitz type inequalities, have been attracted a large interest because they are useful in proving new classification results and put away some possible examples of special metrics on a given manifold. Recently, the Reilly's formula \cite{Reilly1977} have been shown a promising tool to gain new geometric inequalities and obstruction results. In  \cite{Ros}, Ros used the Reilly's formula to prove the Alexandrov’s rigidity theorem for high order mean curvatures. Besides, Miao, Tam and Xie  \cite{Miao-Tam-N.-Q} used the Reilly's formula to obtain a stability inequality for Wang-Yau energy. A similar result was obtained by Kwong and Miao \cite{KM} to the boundary of static spaces. In the work \cite{Qiu-Xia}, Qiu and Xia proved a generalized Reilly’s formula that was used to give an alternative proof of the Alexandrov’s theorem and prove a new Heintze-Karcher inequality for Riemannian manifolds with boundary and sectional curvature bounded from below. Subsequently, Xia \cite{Xia} used the generalized Reilly’s formula to establish a Minkowski type inequality for weighted mixed volumes in non-Euclidean space forms. Very recently, Di\'ogenes, Pinheiro and Ribeiro \cite{DPR} used the generalized Reilly’s formula by Qiu and Xia to obtain sharp integral estimates for critical metrics of the volume functional that were used to obtain a sharp boundary estimate for such metrics. 

In our first result, we shall use the generalized Reilly’s formula by Qiu and Xia (Proposition \ref{prop-qui-xia}) to establish a new boundary estimate for SPFST. More precisely, we have the following result.

\begin{theorem}
\label{th1B}
Let $\big(M^{n},\,g,\,f,\,\rho\big)$ be a compact oriented static perfect fluid space-time with connected boundary $\partial M$ and positive scalar curvature satisfying
\begin{eqnarray}\label{cte}
    \frac{n-2}{2(n-1)}R+\frac{n}{n-1}\rho=-\Lambda,
\end{eqnarray} where $\Lambda$ is a positive constant. Then we have:
\begin{eqnarray}
\label{th1-est01}
Vol(M)\geq\frac{1}{\Lambda}\sqrt{\frac{R_{min}+3\Lambda}{2n}}\,|\partial M|,
\end{eqnarray} where $R_{min}$ is the minimum value of $R$ on $M^n$ and $|\partial M|$ is the area of $\partial M.$ Moreover, if equality holds in (\ref{th1-est01}), then $(M^n,\,g)$ is isometric to the round hemisphere $\mathbb{S}^{n}_{+}.$
\end{theorem}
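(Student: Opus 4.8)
The plan is to feed the static potential $f$ into the generalized Reilly's formula of Qiu–Xia (Proposition~\ref{prop-qui-xia}) and extract an integral identity whose boundary term is controlled by $|\partial M|$ and whose interior term is controlled by $\mathrm{Vol}(M)$. First I would record the basic consequences of the defining equations. From \eqref{eq-general} we have $\nabla^2 f = f\,\mathring{Ric} + \tfrac{\Delta f}{n}g$, and combining \eqref{eq-laplacian} with the hypothesis \eqref{cte} gives $\Delta f = -\Lambda f$, so $f$ is an eigenfunction of the Laplacian with eigenvalue $\Lambda$. Hence $\nabla^2 f = f\,\mathring{Ric} - \tfrac{\Lambda}{n}fg$. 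On the boundary $\partial M = f^{-1}(0)$, the function $f$ vanishes, so $\nabla f = f_\nu \nu$ is purely normal along $\partial M$ (where $\nu$ is the outward unit normal), and standard facts for static-type potentials give that $|\nabla f|$ is a nonzero constant on each boundary component; since $\partial M$ is connected, $|f_\nu|$ is a single constant $c>0$ on $\partial M$, and restricting $\nabla^2 f$ to $T\partial M$ shows that $\partial M$ is totally umbilical — indeed $f$ being constant on $\partial M$ forces the second fundamental form to be proportional to the induced metric.

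Next I would apply the generalized Reilly's formula to $u = f$. The interior integrand of that formula is a combination of $(\Delta f)^2$, $|\nabla^2 f|^2$, and a $Ric(\nabla f,\nabla f)$ term (possibly with the auxiliary function present in the Qiu–Xia version set to a constant so that its contribution is controlled by the scalar curvature lower bound $R\geq R_{min}$). Using $\Delta f = -\Lambda f$ and the traceless decomposition $|\nabla^2 f|^2 = f^2|\mathring{Ric}|^2 + \tfrac{\Lambda^2}{n}f^2 \geq \tfrac{\Lambda^2}{n}f^2$, together with the Bochner-type rearrangement, one obtains an inequality of the schematic shape
\begin{equation*}
\int_{\partial M} \big(\text{boundary terms in } c, H, R\big)\, dA \;\geq\; \Big(\tfrac{R_{min}+3\Lambda}{?}\Big)\int_M f^2\, dV \;\geq\; 0,
\end{equation*}
where the precise constant $\tfrac{1}{2n}$ and the factor $R_{min}+3\Lambda$ emerge after inserting the contracted second Bianchi identity $\mathrm{div}\,\mathring{Ric} = \tfrac{n-2}{2n}\nabla R$ and the relation between $\Delta R$, $\rho$, and $f$. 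Because $f$ vanishes on $\partial M$, most boundary terms drop, and the surviving one is $c^2$ times the area; evaluating $c$ via $\int_M \Delta f = \int_{\partial M} f_\nu$ against $\int_M f$ gives the clean proportionality between $c\,|\partial M|$ and $\Lambda \int_M f$, and a further Cauchy–Schwarz / Pohozaev-type step converts $\int_M f$ into $\mathrm{Vol}(M)$. Assembling these yields \eqref{th1-est01}.

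The main obstacle I anticipate is bookkeeping the exact form of the Qiu–Xia boundary integrand — it involves the umbilicity factor, the mean curvature $H$ of $\partial M$, and the normal derivative $f_\nu$ — and showing that, on $\partial M = f^{-1}(0)$, all of these combine into precisely $c^2|\partial M|$ with the right constant $\sqrt{(R_{min}+3\Lambda)/(2n)}$; getting the sharp constant (rather than a weaker one) is where the choice of auxiliary data in the generalized Reilly formula and the sharp use of $|\nabla^2 f|^2 \geq (\Delta f)^2/n$ must be calibrated carefully. For the rigidity statement, equality forces $\mathring{Ric}\equiv 0$ (so $(M^n,g)$ is Einstein with positive scalar curvature), $R\equiv R_{min}$ constant, $\partial M$ totally geodesic-to-umbilic with the extremal umbilicity constant, and $f$ satisfying $\nabla^2 f = -\tfrac{\Lambda}{n} f g$; by Obata's theorem (Reilly \cite{Reilly1977}) this characterizes the round hemisphere $\mathbb{S}^n_+$, and one checks the boundary conditions $f^{-1}(0)=\partial M$ are consistent with $f = \cos h$ there.
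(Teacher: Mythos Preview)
Your overall strategy --- apply the Qiu--Xia generalized Reilly formula with $u=f$ and then run the equality case through Reilly/Obata --- matches the paper, and your rigidity paragraph is essentially correct. But the execution sketch contains several concrete errors that would prevent the argument from closing.

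First, the auxiliary (weight) function in Proposition~\ref{prop-qui-xia} must be $f$ itself, not a constant. Setting it to a constant recovers the classical Reilly formula and loses the interaction with the static equation $f\mathring{Ric}=\mathring{\nabla}^2f$; the paper instead specializes Proposition~\ref{prop-qui-xia} with weight $f$ and $u=f$, $\kappa=\Lambda/n$ (this is Lemma~\ref{lemplk1}). Second, the interior quantity that emerges is $\int_M f|\nabla f|^2\,dV_g$, not $\int_M f^2\,dV_g$: after combining Lemma~\ref{lemplk1} with Bochner's formula and Green's identity one arrives at the exact identity
\[
\int_M f\,|\mathring{\nabla}^2 f|^2\,dV_g \;=\; |\nabla f|^3_{|\partial M}\,|\partial M| \;-\; \frac{1}{n}\int_M (R+3\Lambda)\,f\,|\nabla f|^2\,dV_g,
\]
so the boundary term is $c^3|\partial M|$ (cubic in $c=|\nabla f|_{|\partial M}$), not $c^2|\partial M|$, and the contracted second Bianchi identity plays no role. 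Third, the passage from this identity to the volume--area inequality is not a single Cauchy--Schwarz against $\int_M f$; the paper uses \emph{two} weighted H\"older inequalities with weight $-\Delta f=\Lambda f>0$, namely
\[
\Big(\int_M f\Delta f\Big)^2 \le \Big(\int_M f^2(-\Delta f)\Big)\Big(\int_M(-\Delta f)\Big)
\quad\text{and}\quad
\Big(\int_M \Delta f\Big)^2 \le \Lambda\,\mathrm{Vol}(M)\int_M f(-\Delta f),
\]
together with $\int_M f^2(-\Delta f)=2\int_M f|\nabla f|^2$, to obtain $2\Lambda^2\mathrm{Vol}(M)^2\int_M f|\nabla f|^2 \ge c^3|\partial M|^3$. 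Feeding this lower bound into the displayed identity and using $\int_M f|\mathring{\nabla}^2 f|^2\ge 0$ and $R\ge R_{\min}$ gives \eqref{th1-est01}. Without these specific steps your sketch does not produce the sharp constant $\sqrt{(R_{\min}+3\Lambda)/(2n)}$.
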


Notice that Theorem \ref{th1B} can be seen as an obstruction result, in terms of the area of the single-horizon (connected boundary), for the existence of a static perfect fluid space-time with positive scalar curvature on a given compact manifold $M^n$ with boundary $\partial M.$

\begin{remark}\label{remarkLambda}
As was mentioned, the constant $\Lambda$ in Theorem \ref{th1B} is positive. Indeed, supposing that $\Lambda\leq0,$ since $\Delta f=-\Lambda f$ and $f$ is a non-negative function with $f^{-1}(0)=\partial M,$ we may use the Maximum Principle to infer that $f=0$ in $M,$ which leads to a contradiction.
\end{remark}

\begin{remark}
We highlight that by assuming the dominant energy condition in Theorem \ref{th1B}, one obtains that the scalar curvature of $M^n$ must be positive. In fact, the dominant energy condition asserts $\frac{R}{2}\geq|\rho|$ and hence, if $R(p)=0$ for some point $p\in M,$ then $\rho(p)=0,$ which contradicts the assumption that $\Lambda$ is a positive constant. Moreover, we have from Proposition 2 of \cite{SPFS} that $\rho_{|_{\partial M}}=-\frac{1}{2}R_{|_{\partial M}}$ and consequently, $\Lambda=\frac{1}{n-1}R_{|_{\partial M}}.$ In particular, \eqref{cte} implies that the scalar curvature is constant along the boundary.
\end{remark}

As a consequence of Theorem \ref{th1B}, we obtain the following corollary.

\begin{corollary}\label{cor1}
Let $\big(M^{n},\,g,\,f,\,\rho\big)$ be a compact oriented static perfect fluid space-time with connected boundary $\partial M$ and constant positive scalar curvature $R.$ Then we have:
\begin{eqnarray}\label{cor1-est01}
Vol(M)\geq\sqrt{\frac{(n-1)(n+2)}{2nR}}\,|\partial M|.
\end{eqnarray} Moreover, if equality holds in (\ref{cor1-est01}), then $(M^n,\,g)$ is isometric to the round hemisphere $\mathbb{S}^{n}_{+}$.
\end{corollary}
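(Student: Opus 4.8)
The strategy is to deduce Corollary~\ref{cor1} directly from Theorem~\ref{th1B}; the only point that genuinely requires an argument is that, under the constant scalar curvature assumption, the hypothesis \eqref{cte} holds automatically with $\Lambda=\tfrac{R}{n-1}>0$, so that Theorem~\ref{th1B} becomes applicable.

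First I would show that $\rho$ is forced to be constant, and in fact equal to $-\tfrac{R}{2}$. Combining \eqref{eq-general} and \eqref{eq-laplacian} one rewrites the SPFST system in the Miao--Tam-type form
\[
f\,Ric=\nabla^2 f+\frac{f}{2(n-1)}\big(R-2\rho\big)\,g .
\]
Taking the divergence of this identity and using the contracted second Bianchi identity $\div Ric=\tfrac12\nabla R$ together with the commutation formula $\div(\nabla^2 f)=\nabla\Delta f+Ric(\nabla f)$, the $Ric(\nabla f)$ terms cancel and one is left with
\[
\tfrac{f}{2}\nabla R=\nabla\Delta f+\tfrac{1}{2(n-1)}\nabla\!\big(f(R-2\rho)\big).
\]
Now impose $R$ constant and substitute $\Delta f=\big(\tfrac{n-2}{2(n-1)}R+\tfrac{n}{n-1}\rho\big)f$ from \eqref{eq-laplacian}; after collecting the $f\nabla\rho$ and $\nabla f$ terms, everything collapses to $\nabla\!\big(f(\rho+\tfrac{R}{2})\big)=0$. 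Hence $f(\rho+\tfrac{R}{2})$ is constant on the connected manifold $M$, and since $f\equiv 0$ on $\partial M$ this constant vanishes; because $f>0$ on $\mathrm{int}(M)$ and $f^{-1}(0)=\partial M$ (Definition~\ref{def-perfect-fluid}), we conclude $\rho\equiv-\tfrac{R}{2}$ on $M^n$. Plugging this into the bracket in \eqref{eq-laplacian} gives $\tfrac{n-2}{2(n-1)}R+\tfrac{n}{n-1}\rho=-\tfrac{R}{n-1}$, so \eqref{cte} holds with $\Lambda=\tfrac{R}{n-1}$, which is positive since $R>0$ (consistently with Remark~\ref{remarkLambda}).

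With \eqref{cte} in force, Theorem~\ref{th1B} applies. Since $R$ is constant we have $R_{min}=R$, and substituting $\Lambda=\tfrac{R}{n-1}$ into \eqref{th1-est01} and simplifying,
\[
Vol(M)\ge \frac{1}{\Lambda}\sqrt{\frac{R_{min}+3\Lambda}{2n}}\,|\partial M|
=\frac{n-1}{R}\sqrt{\frac{R+\tfrac{3R}{n-1}}{2n}}\,|\partial M|
=\sqrt{\frac{(n-1)(n+2)}{2nR}}\,|\partial M|,
\]
which is exactly \eqref{cor1-est01}. Finally, the rigidity statement is inherited from Theorem~\ref{th1B}: equality in \eqref{cor1-est01} forces equality in \eqref{th1-est01}, hence $(M^n,g)$ is isometric to the round hemisphere $\mathbb{S}^n_+$ (on which $R$ is indeed constant, $f=\cos h$ and $\rho=-\tfrac{R}{2}$, confirming that the hypotheses are consistent and the estimate sharp).

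The computations above are all routine; the only step needing genuine care is the divergence identity producing $\nabla\!\big(f(\rho+\tfrac{R}{2})\big)=0$ --- getting the coefficients of $f\nabla\rho$ and $\nabla f$ to cancel exactly relies on using the Laplacian equation \eqref{eq-laplacian}, not merely \eqref{eq-general}. One should also not forget to invoke connectedness of $M$ and the boundary condition $f|_{\partial M}=0$ when upgrading ``$f(\rho+\tfrac{R}{2})$ is locally constant'' to ``$\rho\equiv-\tfrac{R}{2}$'', and to check the positivity of $\Lambda$ so that Theorem~\ref{th1B} is legitimately invoked.
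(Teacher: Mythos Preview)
Your proof is correct and follows essentially the same route as the paper: the paper simply cites Proposition~2 of \cite{SPFS} (recalled in Section~\ref{Sec2}) to obtain $\rho=-\tfrac{R}{2}$ and hence $\Lambda=\tfrac{R}{n-1}$, while you reprove that proposition via the divergence/Bianchi computation, and then both arguments finish identically by plugging $R_{min}=R$ and $\Lambda=\tfrac{R}{n-1}$ into Theorem~\ref{th1B}.
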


Before proceeding, it is fundamental to recall the definition of Riemannian Brown-York mass. Let $\Sigma$ be a connected hypersurface in $(M^n,\,g)$ such that $(\Sigma, g_{|_{\Sigma}})$ can be embedded in $\mathbb{R}^n$ as a convex hypersurface. Then, the Riemannian Brown-York mass $\mathfrak{m}_{_{BY}}$ of $\Sigma$ with respect to $g$ is given by 
\begin{eqnarray*}
    \mathfrak{m}_{_{BY}}(\Sigma,g)=\int_{\Sigma}(H_0-H_g) dS_g,
\end{eqnarray*} where $H_0$ and $H_g$ are the mean curvature of $\Sigma$ as hypersurface of $\mathbb{R}^n$ and $M,$ respectively, and $dS_g$ is the volume element of  on $\Sigma$ induced by $g.$ In \cite{Yuan}, motivated by the Riemannian Penrose inequality, Yuan obtained a boundary estimate for static spaces in terms of the Riemannian Brown-York mass. A similar result was established for quasi-Einstein manifolds by Di\'ogenes, Gadelha and Ribeiro \cite{DGR}. In another direction, inspired by ideas outlined in \cite{SPFS}, Andrade and Melo \cite{Andrade-Melo} proved recently that, under suitable conditions, the Hawking mass of Einstein-type manifolds is bounded from below by the area of the boundary.

In our next result, we shall establish a sharp boundary estimate for compact static perfect fluid space-time with (possibly disconnected) boundary in terms of the Riemannian Brown-York mass $\mathfrak{m}_{_{BY}},$ which can be also seen as an obstruction result. Our next result is the following.

\begin{theorem}
\label{theo2}
Let $\big(M^n,\,g,\,f,\,\rho\big)$, $n\geq 3$, be a compact static perfect fluid space-time with (possibly disconnected) boundary and positive scalar curvature satisfying the dominant energy condition. Suppose that each boundary component $(\partial M_i,\,g)$ can be isometrically embedded in $\mathbb{R}^n$ as a convex hypersurface. Then we have
\begin{eqnarray}\label{eqq4}
    |\partial M_i|\leq c\, \mathfrak{m}_{_{BY}}(\partial M_i,g),
\end{eqnarray} where $c$ is a positive constant. Moreover, equality occurs for some component $\partial M_i$ if and only if $M^n$ is isometric to the standard hemisphere $\mathbb{S}^{n}_{+}$. 
\end{theorem}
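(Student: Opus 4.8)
The plan is to reduce \eqref{eqq4} to a sharp pointwise bound for the second fundamental form of the isometric embedding $\partial M_i\hookrightarrow\mathbb{R}^n$, and to extract that bound from the static perfect fluid system along the boundary. The first step is to show that each boundary component is totally geodesic in $M$. Since $f>0$ in the interior and $f^{-1}(0)=\partial M$, the gradient $\nabla f$ does not vanish on $\partial M$; evaluating \eqref{eq-laplacian} on $\partial M$ gives $\Delta f=0$ there, and \eqref{eq-general} gives $\mathring{\nabla}^2 f=0$ there, so $\nabla^2 f=\mathring{\nabla}^2 f+\frac{\Delta f}{n}\,g=0$ on $\partial M$. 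As $\nabla^2 f(X,Y)=\langle\nabla f,\nu\rangle\,\mathrm{II}(X,Y)$ for $X,Y$ tangent to $\partial M$ (using $f|_{\partial M}=0$), this forces $\mathrm{II}\equiv0$ on each $\partial M_i$, hence $H_g\equiv0$ and $\mathfrak{m}_{_{BY}}(\partial M_i,g)=\int_{\partial M_i}H_0\,dS_g$. Moreover $\nabla|\nabla f|^2=2\nabla^2 f(\nabla f,\cdot)=0$ on $\partial M$, so $|\nabla f|$ is constant on each $\partial M_i$; and computing the divergence of \eqref{eq-general} with the contracted Bianchi identity and \eqref{eq-laplacian} gives $dR\wedge df=0$, so $R$ is constant on the level sets of $f$ and in particular $R|_{\partial M}\equiv R_0$ for a single constant $R_0>0$ (positivity as in the Remark after \thmref{th1B}).

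Next I would prove the main inequality. Let $\nu$ be the outward unit normal of $\partial M_i$ in $M$; by the Gauss equation and total geodesy, the intrinsic scalar curvature of $(\partial M_i,g)$ equals $R_{\partial M_i}=R_0-2\,Ric_g(\nu,\nu)$. On the other hand, viewing $\partial M_i$ as a convex hypersurface of $\mathbb{R}^n$ with second fundamental form $A_0$ and mean curvature $H_0=\mathrm{tr}\,A_0>0$, the Gauss equation in $\mathbb{R}^n$ reads $R_{\partial M_i}=H_0^{\,2}-|A_0|^2$, and Newton's inequality $|A_0|^2\geq\frac{1}{n-1}H_0^{\,2}$ yields $H_0^{\,2}\geq\frac{n-1}{n-2}R_{\partial M_i}$. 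Combining the two,
\[
H_0\ \geq\ \sqrt{\tfrac{n-1}{n-2}\bigl(R_0-2\,Ric_g(\nu,\nu)\bigr)}\qquad\text{on }\partial M_i .
\]
The decisive ingredient is then the boundary estimate $Ric_g(\nu,\nu)\leq R_0/n$ on $\partial M_i$, i.e.\ $\mathring{Ric}(\nu,\nu)\leq0$ there; granting it, $H_0\geq\sqrt{(n-1)R_0/n}$ pointwise on $\partial M_i$, whence
\[
\mathfrak{m}_{_{BY}}(\partial M_i,g)=\int_{\partial M_i}H_0\,dS_g\ \geq\ \sqrt{\tfrac{(n-1)R_0}{n}}\,|\partial M_i| ,
\]
which is \eqref{eqq4} with $c=\sqrt{n/\bigl((n-1)R_0\bigr)}$.

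The step I expect to be the main obstacle is the boundary estimate $\mathring{Ric}(\nu,\nu)\leq0$ on $\partial M_i$. From \eqref{eq-general} one has $\mathring{Ric}(\nu,\nu)=\lim_{\partial M_i}\mathring{\nabla}^2 f(\nu,\nu)/f$, so one has to control the third-order normal jet of $f$ along $\partial M_i$; this is the one place where the full system \eqref{eq-general}--\eqref{eq-laplacian} and the dominant energy condition must be used. The approach would be to combine \eqref{eq-laplacian}, the identity $\rho|_{\partial M}=-\tfrac12R_0$ of \cite[Proposition~2]{SPFS}, the bound $R\geq2|\rho|$, and the Riccati equation for the mean curvature of the level sets $\{f=t\}$ near $\partial M_i$ --- or, alternatively, the generalized Reilly formula of Proposition~\ref{prop-qui-xia} applied with weight $f$, in the spirit of the proof of \thmref{th1B} --- to obtain a differential inequality along $\partial M_i$ that pins down the sign; isolating the sharp constant is the delicate point.

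Finally, for the rigidity statement, equality in \eqref{eqq4} for some component $\partial M_i$ forces equality in Newton's inequality a.e.\ on $\partial M_i$, so $A_0$ is pure trace and the embedded $\partial M_i$ is a round sphere of radius $r_0=\sqrt{n(n-1)/R_0}$, and it also forces $\mathring{Ric}(\nu,\nu)\equiv0$ on $\partial M_i$. Tracing this equality back through the previous step, via the rigidity case of Proposition~\ref{prop-qui-xia} exactly as in the equality discussion of \thmref{th1B}, gives $\mathring{Ric}\equiv0$ on all of $M$, so $(M^n,g)$ is Einstein and carries the nonconstant solution $f$ of \eqref{eq-general}; Obata's theorem \cite{Reilly1977} then identifies $(M^n,g)$ with the round hemisphere $\mathbb{S}^n_+$. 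Conversely $\mathbb{S}^n_+$ realizes equality, since there $H_g\equiv0$ while $H_0\equiv(n-1)/r_0=\sqrt{(n-1)R_0/n}$.
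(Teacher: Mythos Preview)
Your strategy is quite different from the paper's and contains a genuine gap at the step you yourself flag as the main obstacle. The paper does \emph{not} try to prove any pointwise sign condition on $\mathring{Ric}(\nu,\nu)$ along $\partial M_i$; instead it makes the conformal change $\overline{g}=v^{4/(n-2)}g$ with $v=(1+\alpha f)^{-(n-2)/2}$ and $\alpha^{-1}=\max_M\big(f^2+\tfrac{n(n-1)}{R_g}|\nabla f|^2\big)^{1/2}$, shows via the dominant energy condition that $R_{\overline g}\ge 0$ (Lemma~\ref{lmby}), computes $H^i_{\overline g}=(n-1)\alpha\,|\nabla f|_{|\partial M_i}>0$, and then applies the Shi--Tam Riemannian positive mass theorem for the Brown--York mass to get $\mathfrak m_{BY}(\partial M_i,\overline g)\ge 0$. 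Since $\overline g=g$ on $\partial M$, this unwinds to $|\partial M_i|\le \big((n-1)\alpha|\nabla f|_{|\partial M_i}\big)^{-1}\mathfrak m_{BY}(\partial M_i,g)$. Rigidity comes from the equality case of Shi--Tam ($\overline g$ flat, hence $R_{\overline g}=0$), which forces $f^2+\tfrac{n(n-1)}{R_g}|\nabla f|^2$ constant and $\Delta_g f=-\tfrac{R_g}{n-1}f$; then Lemma~\ref{lemma1} gives $\mathring{Ric}\equiv 0$ and one concludes via Proposition~1 of \cite{SPFS}. The positive mass theorem is the essential ingredient you are missing.

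As written, your proposal is not a proof: the estimate $\mathring{Ric}(\nu,\nu)\le 0$ on $\partial M_i$ is only asserted, and the outlines you give (Riccati for the level sets of $f$, or the Qiu--Xia formula) do not obviously yield a sign. Your rigidity step has the same defect: from $\mathring{Ric}(\nu,\nu)=0$ on one boundary component you infer $\mathring{Ric}\equiv 0$ on $M$ by ``tracing back,'' but nothing in your argument propagates that equality into the interior. A secondary issue: the divergence of \eqref{eq-general} together with Bianchi and \eqref{eq-laplacian} gives $d\rho\wedge df=0$, not $dR\wedge df=0$; you can still recover that $R$ is constant on each component $\partial M_i$ (via $\rho|_{\partial M}=-\tfrac12 R|_{\partial M}$), but not that $R|_{\partial M}$ is a single constant when $\partial M$ is disconnected.
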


A key ingredient in the proof of Theorem \ref{theo2} is the Riemannian positive mass theorem for Brown-York mass by Shi-Tam \cite{Shi-Tam}, which is equivalent to the (higher dimensional) positive mass theorem for ADM mass by Schoen and Yau \cite{SY1,SY2,SY3} and Lohkamp \cite{Lohkamp}. It should be mentioned that the isometric embedding condition in Theorem \ref{theo2} was needed to use the Riemannian positive mass theorem. According to the solution of the Weyl problem, the isometrical embedding assumption can be replaced by control on sectional curvatures, as for instance, positive Gaussian curvature when $n=3$, see, e.g., \cite{Fang-Yuan,Yuan}.

As an application of Theorem \ref{theo2} we have the following result.

\begin{corollary}\label{coro-BY}
Let $\big(M^n,\,g,\,f,\,\rho\big),$ $n\geq 3,$ be a compact static perfect fluid space-time with (possibly disconnected) boundary and positive scalar curvature. Assume  the dominant energy condition and that each boundary component $(\partial M_i,g)$ can be isometrically embedded in $\mathbb{R}^n$ as a convex hypersurface. Then we have
\begin{eqnarray*}
|\partial M_i|\leq \widetilde{c}\int_{\partial M_i}(R^{\partial M_i}+|\mathring{h_i}|^2) dS_g
\end{eqnarray*} for some positive constant $\widetilde{c},$ where $\mathring{h_i}$ is the traceless second fundamental form of $\partial M_i$ as a hypersurface of $\mathbb{R}^n$ and $R^{\partial M_i}$ is the scalar curvature of $\partial M_i.$ Moreover, equality occurs for some connected component of the boundary if and only if $(M^n,\,g)$ is isometric to the round hemisphere $\mathbb{S}^{n}_{+}.$ 
\end{corollary}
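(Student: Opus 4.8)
The plan is to deduce Corollary~\ref{coro-BY} from Theorem~\ref{theo2} by expressing the Riemannian Brown--York mass of each boundary component entirely through the extrinsic geometry of its isometric embedding in $\mathbb{R}^n$, and then invoking the Gauss equation together with the Cauchy--Schwarz inequality. First I would observe that $\partial M$ is totally geodesic in $M$: on $\partial M=f^{-1}(0)$ one has $f=0$, so by \eqref{eq-general} and \eqref{eq-laplacian}, $\mathring{\nabla}^2 f=f\,\mathring{Ric}=0$ and $\Delta f=\big(\tfrac{n-2}{2(n-1)}R+\tfrac{n}{n-1}\rho\big)f=0$ along $\partial M$, hence $\nabla^2 f\equiv 0$ on $\partial M$. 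Since $\partial M$ is a regular level set of $f$ (so $\nabla f\neq 0$ there), the second fundamental form of $\partial M$ inside $M$ is a nonzero multiple of the restriction of $\nabla^2 f$ to $T\partial M$ and therefore vanishes; in particular the mean curvature $H_g$ of each component $\partial M_i$ in $M$ is identically zero, and the definition of $\mathfrak{m}_{_{BY}}$ reduces to the exact identity $\mathfrak{m}_{_{BY}}(\partial M_i,g)=\int_{\partial M_i}H_0\,dS_g$, where $H_0$ is the mean curvature of $\partial M_i$ as a convex hypersurface of $\mathbb{R}^n$.

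Next I would use the Gauss equation for $\partial M_i\hookrightarrow\mathbb{R}^n$. Writing $h_i$ for the full second fundamental form and $\mathring{h_i}=h_i-\tfrac{H_0}{n-1}g$ for its traceless part, a contraction of the Gauss equation gives $R^{\partial M_i}=H_0^2-|h_i|^2$, and since $|h_i|^2=|\mathring{h_i}|^2+\tfrac{H_0^2}{n-1}$ this becomes
\begin{equation*}
R^{\partial M_i}=\frac{n-2}{n-1}\,H_0^2-|\mathring{h_i}|^2,\qquad\text{equivalently}\qquad H_0^2=\frac{n-1}{n-2}\big(R^{\partial M_i}+|\mathring{h_i}|^2\big)
\end{equation*}
pointwise on $\partial M_i$. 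Applying Cauchy--Schwarz, $\int_{\partial M_i}H_0\,dS_g\leq|\partial M_i|^{1/2}\big(\int_{\partial M_i}H_0^2\,dS_g\big)^{1/2}$, and then combining with the estimate $|\partial M_i|\leq c\,\mathfrak{m}_{_{BY}}(\partial M_i,g)=c\int_{\partial M_i}H_0\,dS_g$ furnished by Theorem~\ref{theo2} and squaring yields
\begin{equation*}
|\partial M_i|\leq c^2\int_{\partial M_i}H_0^2\,dS_g=\widetilde{c}\int_{\partial M_i}\big(R^{\partial M_i}+|\mathring{h_i}|^2\big)\,dS_g,\qquad\widetilde{c}:=\frac{(n-1)c^2}{n-2},
\end{equation*}
which is the desired inequality.

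For the rigidity statement I would trace equality back through the two steps: if equality holds for some component $\partial M_i$, then equality must hold simultaneously in Theorem~\ref{theo2} and in Cauchy--Schwarz, and the former already forces $(M^n,g)$ to be isometric to $\mathbb{S}^{n}_{+}$; conversely, on $\mathbb{S}^{n}_{+}$ the (connected) boundary is a round $\mathbb{S}^{n-1}$ embedding in $\mathbb{R}^n$ as the unit sphere with $H_0=n-1$ constant and $\mathring{h_i}=0$, for which $R^{\partial M_i}=(n-1)(n-2)$ and all the inequalities above are equalities. Since Theorem~\ref{theo2} is taken as given, the only point requiring care is that $\partial M$ really is a regular hypersurface, i.e. $\nabla f\neq 0$ on $\partial M$; this is what makes $H_g\equiv 0$ and turns the passage from $\mathfrak{m}_{_{BY}}$ to $\int_{\partial M_i}H_0\,dS_g$ into an exact identity, so that the rigidity of Theorem~\ref{theo2} is inherited without any loss. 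Everything else is a routine pointwise computation together with Cauchy--Schwarz, so I do not expect a serious obstacle beyond that bookkeeping.
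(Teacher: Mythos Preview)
Your proof is correct and follows essentially the same route as the paper: both arguments combine Theorem~\ref{theo2} (which, since $H_g\equiv 0$, reads $|\partial M_i|\le c\int_{\partial M_i}H_0\,dS_g$) with Cauchy--Schwarz/H\"older to pass from $\int H_0$ to $\int H_0^2$, and then use the Gauss equation identity $H_0^2=\tfrac{n-1}{n-2}(R^{\partial M_i}+|\mathring{h_i}|^2)$ to finish. The rigidity argument is likewise the same, tracing equality back to the equality case of Theorem~\ref{theo2}; your preliminary discussion of $\partial M$ being totally geodesic is material the paper places in Section~\ref{Sec2} rather than in the proof itself.
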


As was already mentioned, geometric inequalities are useful to obtain new obstruction results. In this context, since the boundary $\partial M$ of a static perfect fluid space-time is compact and totally geodesic (with the induced metric, see Section \ref{Sec2}), it is natural to seek for an estimate for the area of the boundary $\partial M$ in terms of the eigenvalue of Jacobi operator of $\partial M.$ Before to state our next result, we recall that given an arbitrary function $\varphi\in C^{\infty}(\partial M),$ we define the Jacobi operator (or stability operator) $J$ acting in $\varphi$ as
\begin{eqnarray*}
    J({\varphi})=\Delta_{_{\partial M}}\varphi +(R_{nn}+|h|^2)\varphi,
\end{eqnarray*} where $\Delta_{_{\partial M}}$ stands for the Laplacian operator on $\partial M$, $R_{nn}$ is the Ricci curvature of $M$ in the direction of the outward unit normal vector field $\nu=-\frac{\nabla f}{|\nabla f|}$ and $h$ is the second fundamental form of $\partial M;$ for more details, see Section \ref{Sec2}. Furthermore, let $\lambda_1$ be the first eigenvalue of the Jacobi operator $J,$ i.e., $J(\varphi)=-\lambda_1 \varphi.$ Thus, $\lambda_1$ is given by
\begin{eqnarray}\label{eq-egvle}
    \lambda_1=\inf_{\varphi\neq 0} \frac{-\int_{\partial M}\varphi J(\varphi) dS_g}{\int_{\partial M}\varphi^2 dS_g}.
\end{eqnarray} The \textit{index} of $\partial M$ is the number of (counted with multiplicity) negative eigenvalues of $J.$ Moreover, when the index is zero, we say that the boundary $\partial M$ is \textit{stable}. Recently, assuming that the boundary is Einstein and has positive scalar curvature, Barros and Silva \cite[Theorem 1.10]{BS} proved an estimate for the area of the boundary of a triple static space depending on the first eigenvalue of the Jacobi operator. Here, inspired by \cite{BS} and ideas outlined in \cite[Theorem 1]{SPFS}, we shall establish a similar estimate for the area of the boundary $\partial M$ of a static perfect fluid space-time. More precisely, we have the following result.

\begin{theorem}\label{theo2.1}
Let $(M^n,g,f,\rho)$ be a compact oriented static perfect fluid space-time with connected Einstein boundary and $R^{\partial M}>0.$ Then, one has
\begin{equation}\label{eq-th2.1}
\left(R_{*}+2\lambda_{1}\right)|\partial M|^{\frac{2}{n-1}}\leq (n-1)(n-2)(\omega_{n-1})^{\frac{2}{n-1}},
\end{equation} where $R_{*}=\min\{R(p);\,p\in\partial M\}$ and $\omega_{n-1}$ denotes the volume of the round unit sphere $\Bbb{S}^{n-1}.$ Moreover, if equality occurs in \eqref{eq-th2.1}, then $(\partial M,g_{_{\partial M}})$ is isometric, up to scaling, to the standard sphere $\mathbb{S}^{n-1}.$
\end{theorem}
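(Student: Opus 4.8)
The plan is to combine the static perfect fluid equations restricted to the boundary with the variational characterization of $\lambda_1$ and an Obata/Lichnerowicz-type estimate for the boundary metric. First I would record what the SPFST structure gives along $\partial M=f^{-1}(0)$: since $f$ vanishes on the boundary and $\partial M$ is totally geodesic, the normal $\nu=-\nabla f/|\nabla f|$ is well defined there, and evaluating $f\mathring{Ric}=\mathring{\nabla}^2 f$ at the boundary forces the traceless Hessian of $f$ to vanish on $T\partial M$; together with $\Delta f=-\Lambda f$ (using \eqref{eq-laplacian} with $\Lambda=\frac{1}{n-1}R_{|_{\partial M}}$, as noted in the Remark) this pins down $\nabla^2 f$ at $\partial M$ in terms of $|\nabla f|$ and $R_{|_{\partial M}}$. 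In particular one gets $R_{nn}=-\frac{\Delta f}{f}\big|_{\partial M}=\Lambda$ on $\partial M$ by L'Hôpital / the standard boundary analysis for static potentials, and since $h\equiv 0$ the Jacobi operator simplifies to $J(\varphi)=\Delta_{\partial M}\varphi+R_{nn}\varphi$. The Gauss equation on the totally geodesic boundary then yields $R^{\partial M}=R_{|_{\partial M}}-2R_{nn}$, so that $R_{nn}=\tfrac12(R_{|_{\partial M}}-R^{\partial M})$; combined with $R_{nn}=\Lambda=\frac{1}{n-1}R_{|_{\partial M}}$ this determines $R^{\partial M}$ along the boundary in terms of $R_{|_{\partial M}}$ — concretely $R^{\partial M}=\frac{n-3}{n-1}R_{|_{\partial M}}$, which is constant on $\partial M$ when $R_{|_{\partial M}}$ is.

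Next I would feed this into the eigenvalue inequality. Testing \eqref{eq-egvle} with $\varphi\equiv 1$ gives $\lambda_1\le -R_{nn}$... but we want a lower bound, so instead I would argue as in \cite{BS}: because $(\partial M,g_{\partial M})$ is Einstein with positive scalar curvature, Lichnerowicz's theorem gives $\lambda_1^{\Delta}\ge \frac{R^{\partial M}}{n-2}$ for the first \emph{nonzero} eigenvalue of $-\Delta_{\partial M}$, hence $\lambda_1=-R_{nn}-\lambda_1^{\Delta}$ on the complement of constants, i.e. $\lambda_1\le -R_{nn}-\frac{R^{\partial M}}{n-2}$ — again the wrong direction. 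The resolution is that the relevant inequality runs through Bishop–Gromov/Bishop volume comparison: an $(n-1)$-dimensional closed Einstein manifold with $Ric_{\partial M}=\frac{R^{\partial M}}{n-1}g_{\partial M}>0$ satisfies, by the Bishop inequality applied to the rescaled metric of Ricci lower bound $(n-2)k$ with $k=\frac{R^{\partial M}}{(n-1)(n-2)}$, the volume bound $|\partial M|\le \omega_{n-1}\big(\frac{(n-1)(n-2)}{R^{\partial M}}\big)^{\frac{n-1}{2}}$, which rearranges to $R^{\partial M}|\partial M|^{\frac{2}{n-1}}\le (n-1)(n-2)\omega_{n-1}^{\frac{2}{n-1}}$. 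Then I would bound $R^{\partial M}$ from below by $R_*+2\lambda_1$ using the boundary identities from the first paragraph: one has $R^{\partial M}=R_{|_{\partial M}}-2R_{nn}$ and $\lambda_1\le -R_{nn}$ (the $\varphi\equiv1$ test), so $R^{\partial M}=R_{|_{\partial M}}-2R_{nn}\ge R_* + 2\lambda_1$ since $R_{|_{\partial M}}\ge R_*$ and $-2R_{nn}\ge 2\lambda_1$. Substituting into the Bishop bound gives exactly \eqref{eq-th2.1}.

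For the rigidity statement, equality in \eqref{eq-th2.1} forces equality in the Bishop volume comparison for $(\partial M,g_{\partial M})$, which by Cheng's rigidity (the equality case of Bishop's inequality) makes $(\partial M,g_{\partial M})$ isometric to a round sphere of the appropriate radius — i.e. isometric up to scaling to $\mathbb{S}^{n-1}$, as claimed. I would also note that equality simultaneously forces $\lambda_1=-R_{nn}$ with the constant as eigenfunction and $R_{|_{\partial M}}=R_*$, consistent with the hemisphere model.

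The main obstacle I anticipate is getting the boundary curvature bookkeeping exactly right — in particular verifying $R_{nn}=\Lambda$ on $\partial M$ (which requires a careful L'Hôpital-type computation using $f\to 0$, $\Delta f=-\Lambda f$, and $f\mathring{Ric}=\mathring{\nabla}^2 f$, much as in the totally-geodesic boundary analysis for static spaces), and confirming the sign conventions so that the $\varphi\equiv 1$ test indeed yields $\lambda_1\le -R_{nn}$ with the Laplacian sign convention used in \eqref{eq-egvle}. A secondary subtlety is justifying that the Einstein condition on $\partial M$ plus $R^{\partial M}>0$ is genuinely what powers the volume comparison (rather than needing a pointwise Ricci bound on all of $M$), and handling the scaling normalization in the equality case so the conclusion is stated "up to scaling."
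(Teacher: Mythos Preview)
Your approach is essentially the paper's: test the Jacobi operator with $\varphi\equiv 1$, combine with the Gauss identity $R^{\partial M}=R-2R_{nn}$ on the totally geodesic boundary, and then invoke Bonnet--Myers plus Bishop--Gromov on the Einstein boundary to bound $R^{\partial M}|\partial M|^{2/(n-1)}$ from above by $(n-1)(n-2)\omega_{n-1}^{2/(n-1)}$. The rigidity via the equality case of Bishop--Gromov is also exactly the paper's argument.

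Two corrections to the execution. First, your entire first paragraph is a detour you do not need: the paper never establishes (nor uses) a pointwise identity $R_{nn}=\Lambda$ on $\partial M$, and your L'H\^{o}pital sketch for it is not convincing (the equation $f\mathring{Ric}=\mathring{\nabla}^2 f$ at $f=0$ only gives $\nabla^2 f=0$ on $\partial M$, not a value for $R_{nn}$). Second, the step ``$-2R_{nn}\ge 2\lambda_1$'' is not a pointwise inequality; the test $\varphi\equiv 1$ only yields the averaged bound $\lambda_1|\partial M|\le -\int_{\partial M}R_{nn}\,dS_g$. The clean fix---which is precisely what the paper does---is to integrate the Gauss identity: since $R^{\partial M}$ is constant,
\[
R^{\partial M}\,|\partial M|=\int_{\partial M}R\,dS_g-2\int_{\partial M}R_{nn}\,dS_g\ \ge\ R_*\,|\partial M|+2\lambda_1\,|\partial M|,
\]
so $R^{\partial M}\ge R_*+2\lambda_1$, and then your Bishop bound gives \eqref{eq-th2.1}. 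With this adjustment your proof coincides with the paper's.
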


	\medskip

This article is organized as follows. In Section \ref{Sec2}, we review some basic facts and useful lemmas on static perfect fluid space-time (SPFST) that will be used in the proofs of the main results. Example \ref{exA} is also discussed in Section \ref{Sec2}. Finally, Section 3 collects the proofs of Theorems \ref{th1B}, \ref{theo2} and \ref{theo2.1} and Corollaries \ref{cor1} and \ref{coro-BY}.

\medskip

{\bf Acknowledgement.} The authors want to thank the anonymous referees for their careful reading, relevant remarks and valuable suggestions. Moreover, the authors want to thank R. Batista and B. Leandro for their interest in this work and for pointing out the references \cite{BLP,Masood}.

\section{Preliminaries}
\label{Sec2}

In this section, we recall some basic facts and useful lemmas. Initially, we remember that a static perfect fluid space-time (SPFST) is a Riemannian manifold $(M^n,\,g)$ jointly with a positive smooth function $f$ on $M^n$ satisfying 

\begin{eqnarray}
\label{eqA}
f\mathring{Ric} = \mathring{\nabla}^2 f
\end{eqnarray}
and
\begin{eqnarray}
\label{eqB}
\Delta f=\left(\frac{n-2}{2(n-1)}R+\frac{n}{n-1}\rho\right)f.
\end{eqnarray} We also recall that $f^{-1}(0)=\partial M,$ which implies that $\Delta f=0$ along $\partial M.$ In particular, it is easy to check that SPFST with constant scalar curvature reduces to a static triple space.

It follows from \cite[Lemma 1]{Leandro-Pina-Ribeiro} that $|\nabla f|$ does not vanish on the boundary. We present here an alternative proof of this fact.

\begin{proposition}\label{prop2}
Let $(M^n,\,g,\,f,\,\rho)$ be a compact static perfect fluid space-time with boundary $\partial M.$ Then $|\nabla f|$ is a nonzero constant along $\partial M.$
\end{proposition}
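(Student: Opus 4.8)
The plan is to reduce the static perfect fluid system to a single ``static-type'' equation for $f$ and then use it both to prove constancy and to exclude a zero of $\nabla f$ on $\partial M$. First I would combine \eqref{eqA} with the identity $\mathring{\nabla}^2 f = \nabla^2 f - \tfrac{\Delta f}{n}g$ and \eqref{eqB} to obtain, on all of $M^n$,
\[
\nabla^2 f = f\,A,\qquad A := Ric + \frac{2\rho - R}{2(n-1)}\,g,
\]
with $A$ a smooth symmetric $2$-tensor. In particular, since $\partial M = f^{-1}(0)$, along the boundary one has $f=0$ and also $\Delta f = 0$ by \eqref{eqB} (equivalently $\mathring{\nabla}^2 f=0$ by \eqref{eqA}), hence $\nabla^2 f \equiv 0$ on $\partial M$.

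For the constancy statement I would note that $\nabla |\nabla f|^2 = 2\,\nabla^2 f(\nabla f,\cdot)^{\sharp}$, so the previous paragraph forces $\nabla|\nabla f|^2$ to vanish identically on $\partial M$; restricting to directions tangent to $\partial M$ then shows that $|\nabla f|^2$, and hence $|\nabla f|$, is locally constant on $\partial M$, so it is constant along each connected component.

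To see that this constant is nonzero I would argue by contradiction. If $|\nabla f|(p)=0$ for some $p$ in a boundary component $\Sigma$, then $\nabla f\equiv 0$ on $\Sigma$ by the previous step. Consider $S = \{x\in M : f(x)=0,\ \nabla f(x)=0\}$, a closed set containing $\Sigma$. For $x\in S$ and any geodesic $\gamma$ in $M$ with $\gamma(0)=x$, the function $\phi(t)=f(\gamma(t))$ satisfies the linear second order ODE $\phi''=a(t)\,\phi$ with $a(t):=A(\gamma'(t),\gamma'(t))$ and initial data $\phi(0)=\phi'(0)=0$, hence $\phi\equiv 0$. Applying this to geodesics issuing normally from $\Sigma$ into $M$ shows that $f$ vanishes on a one-sided neighborhood of $\Sigma$, so $S$ meets $\mathrm{int}(M)$; running the same ODE argument along minimizing geodesics inside small balls contained in $\mathrm{int}(M)$ shows that $S\cap\mathrm{int}(M)$ is open (once $f\equiv0$ on an open set, $\nabla f$ vanishes there as well). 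Being also closed and nonempty, and $\mathrm{int}(M)$ connected, we get $S\supseteq\mathrm{int}(M)$, contradicting $f>0$ in the interior.

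The genuinely delicate point is this last step: propagating the vanishing of $f$ from $\Sigma$ into the interior through the overdetermined equation $\nabla^2 f = fA$. This is a unique-continuation phenomenon; one could alternatively invoke Aronszajn's theorem applied to $\Delta f = hf$ after verifying infinite-order vanishing, but the one-dimensional ODE argument above is cleaner and self-contained, and it is the only place where connectedness of $M$ and the sign condition $f>0$ are used. The rest is a routine rearrangement of the defining equations.
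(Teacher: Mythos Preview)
Your argument is correct and essentially the same as the paper's: both rewrite the system as $\nabla^2 f = fA$ for a smooth symmetric tensor $A$, use this to see that $|\nabla f|$ is locally constant on $\partial M$, and rule out $\nabla f(p)=0$ via the ODE $\phi''=A(\gamma',\gamma')\phi$ along a geodesic issuing from $p$. Your open--closed propagation in the last step is more than necessary, however --- the paper simply takes a single geodesic $\gamma$ normal to $\partial M$ at $p$, observes that $\gamma(t)\in\mathrm{int}(M)$ for small $t>0$, and notes that $\phi\equiv 0$ along it already contradicts $f>0$ in the interior.
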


\begin{proof} Since $f$ vanishes on $\partial M,$ one sees from (\ref{eqA}) and (\ref{eqB}) that
\begin{eqnarray*}
X(|\nabla f|^2)=2\nabla^2 f(\nabla f,X)=0,
\end{eqnarray*} for any $X\in \mathfrak{X}(\partial M).$ Hence, $|\nabla f|$ is constant along $\partial M.$ Now, we need to show that $|\nabla f|_{|_{\partial M}}\neq 0.$ Indeed, let $p$ be a point in $\partial M$ and $\gamma:[0,\varepsilon)\rightarrow M$ be a geodesic parametrized by arc length with $\gamma(0)=p$ and $\gamma'(0)\perp \partial M.$ Choosing $u(t)=(f\circ\gamma)(t)$, one has 
\begin{eqnarray*}
	u''(t)=\nabla^2 f(\gamma'(t),\gamma'(t)).
\end{eqnarray*}
Then, it follows from \eqref{eqA} and \eqref{eqB} that there exists a smooth function $F(t)$ so that  $$u''(t)=F(t)u(t).$$ Consequently, 
\begin{eqnarray*}
	\left\{\begin{array}{ccl}
		u''(t)&=&F(t)u(t),\\
		u'(0)&=&g(\nabla f(p),\gamma'(0)),\\
		u(0)&=&f(p)=0.
	\end{array}\right.	
\end{eqnarray*} So, by assuming that $\nabla f(p)=0,$ one deduces that  $u'(0)=0$ and then, by using the existence and uniqueness theorem for ODE, we infer that $u=0$ over a neighborhood of $t=0$ in $[0,\varepsilon),$ which leads to a contradiction with the fact that $f>0$ in the interior of $M$ and $\gamma(t)$ lies in the interior of $M$ for all $t\in(0,\varepsilon_0)$ with sufficient small $\varepsilon_0>0.$ This concludes the proof of the proposition. 
\end{proof}

Proceeding, since $f$ is non-negative, one sees that $\nu=-\frac{\nabla f}{|\nabla f|}$ is the unit outward normal vector field of $\partial M.$ In particular, the divergence theorem implies that the integral of $\Delta f$ is not identically zero. In fact, observe that
\begin{eqnarray*}
\int_M \Delta f dV_g &=&\int_{\partial M}g(\nabla f,\nu) dS_g\nonumber\\&=&-|\nabla f|_{|_{\partial M}}|\partial M|\neq 0.
\end{eqnarray*}  This together with the dominant energy condition $\frac{R}{2}\geq |\rho|$ also implies that $R\geq 0,$ but not identically zero.

From now on, consider an orthonormal frame $\{e_i\}_{i=1}^{n}$ with $e_n=-\frac{\nabla f}{|\nabla f|}.$ Thus, from the second fundamental formula, for $1\leq a,b,c,d\leq n-1,$ one obtains that
\begin{equation*}
    h_{ab}=-\left\langle\nabla_{e_a}\nu,e_b\right\rangle=\frac{1}{|\nabla f|}\nabla_a\nabla_b f=0
\end{equation*}  and hence, $\partial M$ is totally geodesic. Thus, by the Gauss equation, i.e., 
\begin{eqnarray*}
R_{abcd}^{\partial M}=R_{abcd}-h_{ad}h_{bc}-h_{ac}h_{bd},
\end{eqnarray*} we then obtain

$$R^{\partial M}_{abcd}= R_{abcd}.$$ Moreover, we infer 

$$R^{\partial M}_{ac}= R_{ac}-R_{ancn}$$ and 

\begin{eqnarray}
\label{eqe1}
R^{\partial M} = R-2R_{nn}.
\end{eqnarray}

We recall that Proposition 2 in \cite{SPFS} asserts that the scalar curvature $R$ of a static perfect fluid space-time $(M^n,\,g,\,f,\rho)$ is constant if and only if $(\frac{1}{2}R+\rho)f$ is constant. Thus, since $M$ is a compact Riemannian manifold with (non-empty) boundary and $f_{|_{\partial M}}=0,$ one concludes that $R$ is constant if and only if $(\frac{1}{2}R+\rho)f\equiv 0.$ Therefore, by using that $f>0$ in the interior of $M,$ we infer that the scalar curvature is constant if and only if 
\begin{eqnarray}\label{eq-rho}
\rho=-\frac{R}{2}\;\mathrm{on}\;M.
\end{eqnarray} Notice furthermore that if $\rho=-\frac{R}{2}$ over $M,$ then the scalar curvature must be constant even in the empty boundary case.

As was mentioned in the introduction, the classical examples of SPFST are the standard hemisphere $\mathbb{S}^{n}_{+}$ (connected boundary) with standard metric and  the product $[0,\pi]\times\mathbb{S}^{n-1}$ (disconnected boundary) with metric $g=dt^2+(n-2)g_{_{\mathbb{S}^{n-1}}}.$ Now, we are going to describe all details concerning the new example of simply connected SPFST with boundary and constant scalar curvature stated in Example \ref{exA}; see also \cite[Example 2]{DGRPAMS} for more details.

\begin{example}[Example \ref{exA}]
\label{ex2}
	Let $M^n=\mathbb{S}^{p+1}_{+}\times\mathbb{S}^q$, $q>1$, with the doubly warped product metric 
	\begin{eqnarray*}
		g=dr^2+\sin^2(r)g_{\mathbb{S}^p}+\frac{q-1}{p+1}g_{\mathbb{S}^q},
	\end{eqnarray*}
	where $r(x,y)=r(x)$ is the height function of $\mathbb{S}^{p+1}_{+}.$ Moreover, we consider the potential function $f(r)=\cos(r)$ with $r\leq\frac{\pi}{2}.$ Thus, $(M^n,\,g)$ satisfies \eqref{eq-general} and \eqref{eq-laplacian}. In particular, since it has constant scalar curvature, then it is a static space.
	
	To check such an example, we first observe that 
	\begin{eqnarray*}
	\nabla f=-\sin(r)\nabla r.
\end{eqnarray*} From this, one deduces that 

\begin{eqnarray*}
\nabla^2 f=-\cos(r)dr^2-\cos(r)\sin^2(r)g_{\mathbb{S}^p}=-f(dr^2+\sin^2(r)g_{\mathbb{S}^p}).
\end{eqnarray*} Consequently, $$\nabla^2 f=-fg_{\mathbb{S}^{p+1}_{+}}.$$ In particular, one has
\begin{eqnarray*}
\Delta f=-(p+1)f.
\end{eqnarray*} Next, since $g=g_{\mathbb{S}^{p+1}_{+}}+\frac{q-1}{p+1}g_{\mathbb{S}^q}$ is a product metric, one obtains that
\begin{eqnarray}
\label{richjk9}
	Ric=p g_{\mathbb{S}^{p+1}_{+}}+(q-1) g_{\mathbb{S}^q};
\end{eqnarray}  for more details, see, e.g., Corollary 43 in \cite{O'Neil}. Thereby, consider a point $(x,y)\in \mathbb{S}^{p+1}_{+}\times\mathbb{S}^q$ and  an orthonormal basis $$\left\{E_{1},\ldots, E_{p+1},E_{p+2}=\sqrt{\frac{p+1}{q-1}}e_{1},\ldots, E_{p+q+1}=\sqrt{\frac{p+1}{q-1}}e_{q}\right\}$$ for $T_{(x,y)}(\mathbb{S}^{p+1}_{+}\times\mathbb{S}^q),$ where $\left\{E_{1},\ldots, E_{p+1}\right\}$ is an orthonormal frame for $T_{x}\mathbb{S}^{p+1}_{+}$ and $\left\{e_{1},\ldots, e_q \right\}$ is an orthonormal frame for $T_{y}\mathbb{S}^q.$ By tracing (\ref{richjk9}), one deduces that the scalar curvature is constant and given by 
\begin{eqnarray*}
R&=&\sum_{i=1}^{p+q+1}Ric(E_{i},E_{i})\nonumber\\&=&\sum_{i=1}^{p+1}p\,g_{\mathbb{S}^{p+1}_{+}}(E_{i},E_{i}) +\sum_{i=p+2}^{p+q+1}(q-1)g_{\mathbb{S}^q}(E_{i},E_{i})\nonumber\\&=&(p+q)(p+1)=(n-1)(p+1).
\end{eqnarray*} Of which, we arrive at
\begin{equation*}
-(\Delta f)g+\nabla^2 f-f Ric=(p+1)fg-fg_{\mathbb{S}^{p+1}_{+}}-f(pg_{\mathbb{S}^{p+1}_{+}}+(q-1)g_{\mathbb{S}^{q}})=0,
\end{equation*} which proves that  $\mathbb{S}^{p+1}_{+}\times\mathbb{S}^q$ is a static manifold with connected boundary $\partial M=\mathbb{S}^p\times\mathbb{S}^q$ and $f(r)=\cos(r)$ vanishes on the boundary. Furthermore, one easily verifies that $\mathbb{S}^{p+1}_{+}\times\mathbb{S}^q$ is simply connected.
	\end{example}
	
	\medskip

	\begin{remark} As previously mentioned, Example \ref{ex2} is a simply connected static space with positive scalar curvature and connected boundary. Therefore, it is a coun\-terexam\-ple to the Cosmic no-hair conjecture for arbitrary dimension $n\geq 4$.  	
	\end{remark}

Reasoning as in Example \ref{ex2}, we also obtain the following example of positive static triple. 

\begin{example}
We consider $M=[0,\pi]\times\mathbb{S}^p\times\mathbb{S}^q,$ $p$ and $q>1,$ endowed with the metric 
\begin{eqnarray*}
	g=dt^2+(p-1)g_{\mathbb{S}^p}+(q-1)g_{\mathbb{S}^q}.
\end{eqnarray*} Suppose that $f(t)=\sin(t).$ Hence, we deduce that $(M,\,g,\,f)$ is a positive static triple with disconnected boundary consisting of two copies of $\mathbb{S}^p\times\mathbb{S}^q,$ i.e., 
\begin{eqnarray*}
\partial M=(\{0\}\times\mathbb{S}^p\times\mathbb{S}^q)\cup(\{\pi\}\times\mathbb{S}^p\times\mathbb{S}^q).
\end{eqnarray*}

\end{example}

The following divergence formula established in \cite{SPFS} will play a key role in the proof of Theorem \ref{theo2}.

\begin{lemma}[\cite{SPFS}]\label{lemma1}
Let $(M^n,\,g)$ be a Riemannian manifold and $f$ is a smooth function sa\-tis\-fying \eqref{eqA}. Then, in the interior of $M,$ one has
\begin{eqnarray}\label{eq-RS}
{\rm div}\left[\frac{1}{f}\left(\nabla|\nabla f|^2-2\frac{\Delta f}{n}\nabla f\right)\right]=2f|\mathring{Ric}|^2+\frac{n-2}{n}\left\langle\nabla R,\nabla f\right\rangle.
\end{eqnarray}
\end{lemma}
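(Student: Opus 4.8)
The plan is to verify the identity by a direct tensorial computation; the decisive observation is that the bracketed vector field loses its apparent $1/f$ singularity once one uses \eqref{eqA} to trade the Hessian of $f$ for the Ricci tensor. First I would rewrite \eqref{eqA} in non-traceless form: since $\mathring{Ric}=Ric-\frac{R}{n}g$ and $\mathring{\nabla}^2 f=\nabla^2 f-\frac{\Delta f}{n}g$, equation \eqref{eqA} is equivalent to
\[
\nabla^2 f=f\,Ric+\frac{\Delta f-fR}{n}\,g.
\]

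Next I would compute the gradient of $|\nabla f|^2$. Using $\nabla_i|\nabla f|^2=2(\nabla^2 f)_{ij}\nabla^j f$ together with the expression above, one finds
\[
\nabla|\nabla f|^2=2f\,Ric(\nabla f,\cdot)+\frac{2(\Delta f-fR)}{n}\,\nabla f .
\]
Subtracting $2\frac{\Delta f}{n}\nabla f$ cancels the $\Delta f$ contributions and leaves
\[
\nabla|\nabla f|^2-2\frac{\Delta f}{n}\nabla f=2f\Big(Ric(\nabla f,\cdot)-\tfrac{R}{n}\nabla f\Big)=2f\,\mathring{Ric}(\nabla f,\cdot).
\]
Dividing by $f$ (legitimate in the interior, where $f>0$) removes the singular factor completely, so that the entire bracketed field equals $2\,\mathring{Ric}(\nabla f,\cdot)$, i.e. $2\,\mathring{R}_{ij}\nabla^j f$ in coordinates. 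This is the crux of the argument: the $1/f$ singularity is spurious, and the remaining object is a smooth, divergence-friendly vector field.

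Then I would take the divergence of $2\,\mathring{R}_{ij}\nabla^j f$ by the product rule, splitting it into two pieces. For the Hessian piece $2\,\mathring{R}_{ij}\nabla^i\nabla^j f$ I would substitute the expression for $\nabla^2 f$ once more; since $\mathring{Ric}$ is trace-free the $g$-term is annihilated, and $\mathring{R}_{ij}R^{ij}=|\mathring{Ric}|^2$, yielding $2f|\mathring{Ric}|^2$. For the remaining piece $2\,(\nabla^i\mathring{R}_{ij})\nabla^j f$ I would invoke the contracted second Bianchi identity $\nabla^i R_{ij}=\frac12\nabla_j R$, whence $\nabla^i\mathring{R}_{ij}=\big(\tfrac12-\tfrac1n\big)\nabla_j R=\frac{n-2}{2n}\nabla_j R$, producing the term $\frac{n-2}{n}\langle\nabla R,\nabla f\rangle$. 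Summing the two contributions gives exactly \eqref{eq-RS}.

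There is no serious obstacle here; the only genuinely conceptual step is recognizing the cancellation that collapses the bracketed field to $2\,\mathring{Ric}(\nabla f,\cdot)$, after which the calculation is routine and the Bianchi identity supplies the divergence of the traceless Ricci tensor. The only points demanding care are the trace conventions and the systematic use of the tracelessness of $\mathring{Ric}$ to discard the metric terms when contracting.
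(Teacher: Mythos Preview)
Your proof is correct. The paper does not supply its own proof of this lemma, citing it instead from \cite{SPFS} and remarking only that it is a Robinson--Shen type identity; your argument---reducing the bracketed field to $2\,\mathring{Ric}(\nabla f,\cdot)$ via \eqref{eqA} and then taking the divergence using the contracted Bianchi identity---is exactly the standard route for such identities and is presumably how the original reference proceeds.
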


Lemma \ref{lemma1} can be seen as a Robinson-Shen type identity \cite{Shen-FM} for manifolds satisfying Eq. (\ref{eqA}), which is a large class of spaces; see also \cite{Ambrozio, BM,BCM}. In the case of constant scalar curvature, for example, static space or $V$-static space, we deduce that the expression in the left hand side of \eqref{eq-RS} is necessarily non-negative.

Next, we recall the following integral formula that relates the norm of the traceless Ricci tensor and the scalar curvature of the boundary; for more details, see Lemma 3 of \cite{SPFS}. This formula will be used in the proof of the boundary estimates. 

\medskip

\begin{lemma}[\cite{SPFS}]
\label{lemma2}
Let $(M^n,\,g,f,\,\rho)$ be a compact oriented static perfect fluid space-time with boundary $\partial M.$ Then we have
\begin{eqnarray*}
\int_{\partial M}|\nabla f|R^{\partial M} dS_g=2\int_M f|\mathring{Ric}|^2 dV_g-\frac{n-2}{n}\int_M R\Delta f dV_g.
\end{eqnarray*}
\end{lemma}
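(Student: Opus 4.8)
The plan is to integrate the Robinson--Shen type identity of Lemma \ref{lemma1} and convert the left-hand side into a boundary integral via the divergence theorem. Since the vector field $X = \frac{1}{f}\big(\nabla|\nabla f|^2 - \frac{2\Delta f}{n}\nabla f\big)$ is singular where $f$ vanishes, I would not integrate directly over $M$, but rather over the region $M_\epsilon = \{f \geq \epsilon\}$ for small $\epsilon>0$ and let $\epsilon\to 0$ at the end. On $M_\epsilon$ the divergence theorem gives $\int_{M_\epsilon}\div(X)\,dV_g = \int_{\{f=\epsilon\}}\langle X,\nu_\epsilon\rangle\,dS_g$, where $\nu_\epsilon = -\frac{\nabla f}{|\nabla f|}$ is the outward normal pointing toward decreasing $f$. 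Since the right-hand side of \eqref{eq-RS}, namely $2f|\mathring{Ric}|^2 + \frac{n-2}{n}\langle\nabla R,\nabla f\rangle$, is bounded, $\div(X)$ is integrable on $M$ and the volume integral converges to $\int_M\div(X)\,dV_g$ as $\epsilon\to0$.

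The core of the argument is the evaluation of the boundary flux and the verification that the apparent $\frac{1}{f}$ singularity cancels. Using $\langle\nabla|\nabla f|^2,Y\rangle = 2\nabla^2 f(\nabla f,Y)$ one finds $\langle X,\nu_\epsilon\rangle = \frac{1}{f}\big(-\frac{2}{|\nabla f|}\nabla^2 f(\nabla f,\nabla f) + \frac{2\Delta f}{n}|\nabla f|\big)$. I would then rewrite the structure equation \eqref{eqA} as $\nabla^2 f = f\,Ric - \frac{fR}{n}g + \frac{\Delta f}{n}g$, so that $\nabla^2 f(\nabla f,\nabla f) = f\,Ric(\nabla f,\nabla f) - \frac{fR}{n}|\nabla f|^2 + \frac{\Delta f}{n}|\nabla f|^2$. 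Substituting, the two terms proportional to $\Delta f$ cancel and each surviving term carries a factor $f$, giving $\langle X,\nu_\epsilon\rangle = \frac{2R}{n}|\nabla f| - \frac{2}{|\nabla f|}Ric(\nabla f,\nabla f)$, which is regular up to $\partial M$. Writing $Ric(\nabla f,\nabla f) = |\nabla f|^2 R_{nn}$ and replacing $R_{nn} = \frac{1}{2}(R - R^{\partial M})$ via \eqref{eqe1}, the flux in the limit becomes $\int_{\partial M}\langle X,\nu\rangle\,dS_g = \int_{\partial M}|\nabla f|R^{\partial M}\,dS_g - \frac{n-2}{n}\int_{\partial M}|\nabla f|R\,dS_g$.

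It remains to treat the term $\frac{n-2}{n}\int_M\langle\nabla R,\nabla f\rangle\,dV_g$ coming from the right-hand side of \eqref{eq-RS}. Integrating by parts through $\div(R\nabla f) = \langle\nabla R,\nabla f\rangle + R\Delta f$ and using $\langle\nabla f,\nu\rangle = -|\nabla f|$ on $\partial M$, I obtain $\int_M\langle\nabla R,\nabla f\rangle\,dV_g = -\int_{\partial M}|\nabla f|R\,dS_g - \int_M R\Delta f\,dV_g$. Substituting the flux computation and this identity into the integrated form of \eqref{eq-RS}, the two boundary integrals $-\frac{n-2}{n}\int_{\partial M}|\nabla f|R\,dS_g$ appearing on each side cancel, leaving exactly $\int_{\partial M}|\nabla f|R^{\partial M}\,dS_g = 2\int_M f|\mathring{Ric}|^2\,dV_g - \frac{n-2}{n}\int_M R\Delta f\,dV_g$, which is the claimed identity.

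The main obstacle is the second paragraph: justifying the limit $\epsilon\to 0$ for the flux and, above all, showing that the $\frac{1}{f}$ factor in $X$ does not produce a divergent boundary contribution. This hinges on the precise cancellation forced by the static perfect fluid equation \eqref{eqA}, together with Proposition \ref{prop2} (which guarantees $|\nabla f|$ is a nonzero constant on $\partial M$) and the fact that $\Delta f = 0$ there; without these the level-set flux would have no finite limit. Once the cancellation is secured, the remaining steps are routine applications of the divergence theorem and \eqref{eqe1}.
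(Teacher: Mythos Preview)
Your argument is correct. The paper itself does not supply a proof of this lemma; it merely recalls the statement from \cite{SPFS} (see the sentence preceding Lemma~\ref{lemma2}), so there is no in-paper proof to compare against. That said, your approach---integrating the Robinson--Shen type identity of Lemma~\ref{lemma1} over the sublevel sets $\{f\ge\epsilon\}$, verifying that the apparent $1/f$ singularity in the flux cancels thanks to \eqref{eqA}, and then letting $\epsilon\to0$---is exactly the natural route and is almost certainly what the cited reference does, since Lemma~\ref{lemma2} is precisely the integrated form of Lemma~\ref{lemma1}. Your cancellation computation, the use of \eqref{eqe1} on $\partial M$ after passing to the limit, and the integration by parts on $\langle\nabla R,\nabla f\rangle$ are all handled correctly; Proposition~\ref{prop2} indeed guarantees that the level sets $\{f=\epsilon\}$ are regular near $\partial M$ and that the flux has a finite limit.
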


\medskip

Proceeding, by using Lemmas \ref{lemma1} and \ref{lemma2} and assuming a suitable integral inequality, we shall deduce an area estimate for the boundary of a compact static perfect fluid space-time. The following proposition is related with Proposition 3 of \cite{SPFS}.

\begin{proposition}\label{propnew}
	Let $(M^n,\,g,\,f,\,\rho)$ be a compact oriented static perfect fluid space-time with connected boundary $\partial M.$ Suppose that 
	\begin{eqnarray}\label{eq-replace0}
		\int_{M} \langle\nabla R,\nabla f\rangle dV_g \geq 0.
	\end{eqnarray} Then, it holds 
	\begin{eqnarray}\label{eqr1}
		\int_{\partial M}R^{\partial M} dS_g\geq \frac{n-2}{n}R_{min}|\partial M|.
	\end{eqnarray} Furthermore, equality holds in (\ref{eqr1}) if and only if $(M^n,\,g)$ is an Einstein manifold. In particular, by assuming in addition that $R>0,$ equality holds in (\ref{eqr1}) if and only if $(M^n,\,g)$ is isometric to the round hemisphere $\mathbb{S}^{n}_{+}.$
\end{proposition}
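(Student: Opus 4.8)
The plan is to combine the two integral identities recalled above, namely Lemma \ref{lemma1} (the Robinson--Shen type divergence formula) and Lemma \ref{lemma2} (the integral formula for $\int_{\partial M}|\nabla f|R^{\partial M}$), using the hypothesis \eqref{eq-replace0} to control the sign of the ``error'' term. First I would integrate \eqref{eq-RS} over $M$. Near the boundary the vector field $\frac{1}{f}\big(\nabla|\nabla f|^2-\tfrac{2}{n}(\Delta f)\nabla f\big)$ has a factor $1/f$ that blows up, so I would be careful and integrate over $M_{\varepsilon}=\{f\ge\varepsilon\}$ first, apply the divergence theorem there, and then let $\varepsilon\to 0$. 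On $\partial M_\varepsilon$ the outward normal is $-\nabla f/|\nabla f|$; using $f\mathring{Ric}=\mathring{\nabla}^2 f$ and $\Delta f=-\Lambda f$ type relations (more precisely the fact, recorded in \secref{Sec2}, that $\partial M$ is totally geodesic and $|\nabla f|$ is a nonzero constant on $\partial M$), the boundary integrand should limit to an expression proportional to $|\nabla f|_{|_{\partial M}}R^{\partial M}$, so that the limit recovers exactly Lemma \ref{lemma2}. (In fact Lemma \ref{lemma2} is presumably proved in \cite{SPFS} in precisely this way, so I may simply invoke it.) Thus the real content is: Lemma \ref{lemma2} gives
\[
\int_{\partial M}|\nabla f|\,R^{\partial M}\,dS_g \;=\; 2\int_M f|\mathring{Ric}|^2\,dV_g-\frac{n-2}{n}\int_M R\,\Delta f\,dV_g .
\]

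Next I would rewrite the last term by integration by parts: $\int_M R\,\Delta f\,dV_g=\int_{\partial M}R\,g(\nabla f,\nu)\,dS_g-\int_M\langle\nabla R,\nabla f\rangle\,dV_g = -|\nabla f|_{|_{\partial M}}\int_{\partial M}R\,dS_g-\int_M\langle\nabla R,\nabla f\rangle\,dV_g$, where I used $\nu=-\nabla f/|\nabla f|$ and that $|\nabla f|$ is constant on $\partial M$. Substituting back and using that $|\nabla f|_{|_{\partial M}}$ is a positive constant, I can divide through by it to obtain
\[
\int_{\partial M}R^{\partial M}\,dS_g \;=\; \frac{2}{|\nabla f|_{|_{\partial M}}}\int_M f|\mathring{Ric}|^2\,dV_g+\frac{n-2}{n}\int_{\partial M}R\,dS_g+\frac{n-2}{n\,|\nabla f|_{|_{\partial M}}}\int_M\langle\nabla R,\nabla f\rangle\,dV_g .
\]
Since $f>0$ in the interior, the first term on the right is nonnegative; by hypothesis \eqref{eq-replace0} the third term is nonnegative; and since $R\ge R_{min}$ pointwise, $\frac{n-2}{n}\int_{\partial M}R\,dS_g\ge\frac{n-2}{n}R_{min}|\partial M|$. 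Combining these three observations yields \eqref{eqr1}.

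For the equality case, equality in \eqref{eqr1} forces all three nonnegative contributions to vanish. From $\int_M f|\mathring{Ric}|^2\,dV_g=0$ and $f>0$ on the interior we get $\mathring{Ric}\equiv 0$ on $M$, i.e.\ $(M^n,g)$ is Einstein; conversely if $M$ is Einstein then $\mathring{Ric}=0$, $R$ is constant so $\langle\nabla R,\nabla f\rangle\equiv 0$ and $R\equiv R_{min}$, and equality holds — this gives the stated characterization. Finally, assuming in addition $R>0$: an Einstein SPFST with positive scalar curvature and nonempty boundary on which the static potential vanishes is, by the Obata-type theorem of Reilly \cite{Reilly1977} (equivalently, the resolution of the Cosmic no-hair conjecture in the Einstein case, as noted in the Introduction), isometric to the round hemisphere $\mathbb{S}^n_+$; and the hemisphere clearly realizes equality. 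The main obstacle I anticipate is the boundary analysis in the first step — justifying the $\varepsilon\to0$ limit of the divergence theorem applied to a vector field with a $1/f$ singularity, and checking that the boundary term converges to exactly the quantity appearing in Lemma \ref{lemma2} — but since Lemma \ref{lemma2} is already available from \cite{SPFS}, this obstacle is bypassed and the argument reduces to the elementary manipulations above.
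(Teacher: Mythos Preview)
Your argument is correct and is essentially the paper's own proof. Both proofs invoke Lemma~\ref{lemma2} and rewrite $\int_M R\,\Delta f\,dV_g$ via the divergence theorem; the only difference is cosmetic---the paper first subtracts $R_{min}$ and shows $\int_M (R-R_{min})\Delta f\,dV_g\le 0$, while you keep the three nonnegative pieces separate, but the computations are identical. For the rigidity step the paper quotes Proposition~1 of \cite{SPFS} (Einstein $+$ totally geodesic boundary $\Rightarrow$ hemisphere) rather than Reilly's theorem directly, but this amounts to the same thing; your opening detour through Lemma~\ref{lemma1} and the $\varepsilon$-regularization is unnecessary once you cite Lemma~\ref{lemma2}, as you yourself note.
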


\begin{proof}
From the Stokes' theorem, one sees that
	\begin{eqnarray*}
		\int_M (R-R_{min})\Delta f dV_g=-|\nabla f|_{|_{\partial M}}\int_{\partial M} (R-R_{min} )dS_g -\int_M  \langle\nabla R,\nabla f\rangle dV_g
	\end{eqnarray*}	 and by using the condition \eqref{eq-replace0}, one obtains that
\begin{eqnarray}\label{eqr2}	
\int_M (R-R_{min})\Delta f dV_g\leq 0.
\end{eqnarray} Next, by Lemma $\ref{lemma2}$ and again the Stokes' theorem, we infer
\begin{eqnarray}\nonumber
	|\nabla f|_{|_{\partial M}} \int_{\partial M} R^{\partial M} dS_g&=& 2\int_{M} f|\mathring{Ric}|^2 dV_g-\frac{n-2}{n}\int_{M} R\Delta f dV_g\\\nonumber
	&\geq& -\frac{(n-2)R_{min}}{n}\int_{M} \Delta f dV_g\\\label{eqr3}
	&=&|\nabla f|_{|_{\partial M}}\frac{n-2}{n}R_{min}|\partial M|,
\end{eqnarray} which proves \eqref{eqr1}. Moreover, equality occurs in the last expression if and only if $\mathring{Ric}\equiv 0,$ i.e., $M^n$ is Einstein. 
In particular, by assuming that $R>0,$ since $\partial M$ is totally geodesic, we may apply Proposition 1 of \cite{SPFS} to deduce that $(M^n,\,g)$ is isometric to, up to scaling, the standard hemisphere $\mathbb{S}^{n}_{+}.$ The converse assertion is straightforward.
So, the proof of the proposition is finished.
\end{proof}

\begin{remark}
By assuming the condition (\ref{eq-replace0}) and that the scalar curvature is positive, we may improve the conditions assumed in Theorem 1 and Corollary 3 of  \cite{SPFS}. To do so, it suffices to observe that, under these assumptions, the integral  $\int_{M} R\Delta f\,dV_g$ is negative.
\end{remark}

Now, we deal with the Reilly's formula. We recall the following generalized Reilly’s formula, obtained previously by Qiu and Xia \cite{Qiu-Xia}, that will
be very useful (see also \cite[Proposition 1]{DPR}).

\begin{proposition}[\cite{Qiu-Xia}]\label{prop-qui-xia}
Let $(M^n,\,g)$ be a compact Riemannian manifold with boundary $\partial M.$ Given two functions $f$ and $u$ on $M^n$ and a constant $\kappa$, we have 

\begin{eqnarray*}
&&\int_{M} f \left((\Delta u + \kappa nu)^{2}-|\nabla^{2}u+\kappa ug|^{2}\right)dV_g=(n-1)\kappa \int_{M}(\Delta f +n\kappa f)u^{2}\,dV_g \nonumber \\ &&+\int_{M}\left(\nabla ^{2}f-(\Delta f)g-2(n-1)\kappa f g+f Ric\right)(\nabla u, \nabla u)\,dV_g \nonumber \\
&&+\int_{\partial M}f \left[2\left(\frac{\partial u}{\partial \nu}\right)\Delta_{_{\partial M}}u+H\left(\frac{\partial u}{\partial \nu}\right)^{2}+ h(\nabla_{_{\partial M}}u, \nabla_{_{\partial M}} u)+2(n-1)\kappa \left(\frac{\partial u}{\partial \nu}\right)u\right]dS_g\nonumber\\
&&+ \int_{\partial M}\frac{\partial f}{\partial \nu}\left(|\nabla _{_{\partial M}}u|^{2}-(n-1)\kappa u^{2}\right) dS_g,
\end{eqnarray*} 
where $h$ and $H=tr_{g}h$ stand for the second fundamental form and mean curvature of $\partial M,$ respectively.
 \end{proposition}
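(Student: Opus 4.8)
The plan is to reduce the stated weighted formula to the classical Reilly identity by combining the Bochner--Weitzenb\"ock formula with repeated integration by parts, treating the weight $f$ and the constant $\kappa$ separately. First I would expand the integrand on the left-hand side. A direct computation gives
\[
(\Delta u+\kappa n u)^2-|\nabla^2 u+\kappa u g|^2=\big((\Delta u)^2-|\nabla^2 u|^2\big)+2(n-1)\kappa\, u\Delta u+n(n-1)\kappa^2 u^2,
\]
so that $\int_M f\big((\Delta u+\kappa nu)^2-|\nabla^2u+\kappa ug|^2\big)dV_g$ splits into a ``weighted classical Reilly'' part $\mathrm{(I)}=\int_M f\big((\Delta u)^2-|\nabla^2u|^2\big)dV_g$ and a ``$\kappa$-part'' $\mathrm{(II)}=(n-1)\kappa\int_M f\big(2u\Delta u+n\kappa u^2\big)dV_g$. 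I will show that $\mathrm{(I)}$ produces the $\kappa$-independent terms of the right-hand side and that $\mathrm{(II)}$ produces all the $\kappa$-dependent ones.

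For $\mathrm{(I)}$, I would start from the pointwise identity
\[
(\Delta u)^2-|\nabla^2u|^2=\mathrm{div}\big((\Delta u)\nabla u-\nabla_{\nabla u}\nabla u\big)+Ric(\nabla u,\nabla u),
\]
which follows from $\tfrac12\Delta|\nabla u|^2=|\nabla^2u|^2+\langle\nabla u,\nabla\Delta u\rangle+Ric(\nabla u,\nabla u)$ (Bochner) together with $\mathrm{div}((\Delta u)\nabla u)=(\Delta u)^2+\langle\nabla u,\nabla\Delta u\rangle$ and $\nabla_{\nabla u}\nabla u=\tfrac12\nabla|\nabla u|^2$. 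Multiplying by $f$ and writing $f\,\mathrm{div}(Y)=\mathrm{div}(fY)-\langle\nabla f,Y\rangle$ for $Y=(\Delta u)\nabla u-\nabla_{\nabla u}\nabla u$, the divergence theorem converts $\mathrm{(I)}$ into $\int_M f\,Ric(\nabla u,\nabla u)dV_g$, a boundary integral $\int_{\partial M}f\langle Y,\nu\rangle dS_g$, and $-\int_M\langle\nabla f,Y\rangle dV_g$. Two further integrations by parts on this last term, using $\int_M(\Delta u)\langle\nabla f,\nabla u\rangle dV_g$ and the identity $\nabla^2u(\nabla f,\nabla u)=\tfrac12\langle\nabla f,\nabla|\nabla u|^2\rangle$, move one derivative off $u$ onto $f$ and exactly assemble the interior integrand $\big(\nabla^2f-(\Delta f)g+fRic\big)(\nabla u,\nabla u)$, leaving the boundary contributions $-\int_{\partial M}\langle\nabla f,\nabla u\rangle u_\nu\,dS_g+\int_{\partial M}|\nabla u|^2 f_\nu\,dS_g$, where $u_\nu=\partial u/\partial\nu$ and $f_\nu=\partial f/\partial\nu$.

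The part $\mathrm{(II)}$ is the easy one: integrating $2\int_M fu\Delta u\,dV_g$ by parts and using $2u\langle\nabla f,\nabla u\rangle=\langle\nabla f,\nabla u^2\rangle$ to integrate by parts once more, I obtain precisely $(n-1)\kappa\int_M(\Delta f+n\kappa f)u^2dV_g$, the interior term $-2(n-1)\kappa\int_M f\,g(\nabla u,\nabla u)dV_g$, together with the boundary terms $2(n-1)\kappa\int_{\partial M}fu_\nu u\,dS_g$ and $-(n-1)\kappa\int_{\partial M}f_\nu u^2\,dS_g$. These match the first line, the remaining summand $-2(n-1)\kappa fg$ of the interior integrand, and the last two boundary summands in the statement, respectively.

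The main obstacle is the boundary bookkeeping for $\mathrm{(I)}$. To identify $\int_{\partial M}f\langle Y,\nu\rangle dS_g$ I would decompose $\nabla u=\nabla_{_{\partial M}}u+u_\nu\nu$ and use the standard splittings $\Delta u=\Delta_{_{\partial M}}u+Hu_\nu+\nabla^2u(\nu,\nu)$ and $\nabla^2u(X,\nu)=X(u_\nu)-h(X,\nabla_{_{\partial M}}u)$ for $X$ tangent to $\partial M$, with $h(X,Y)=\langle\nabla_X\nu,Y\rangle$ and $H=\mathrm{tr}_g h$. This yields a term $-\int_{\partial M}f\,\nabla_{_{\partial M}}u(u_\nu)\,dS_g$, which I would integrate by parts on the closed manifold $\partial M$ to produce a second copy of $\int_{\partial M}fu_\nu\Delta_{_{\partial M}}u\,dS_g$ (giving the factor $2$) plus a term $\int_{\partial M}u_\nu\langle\nabla_{_{\partial M}}f,\nabla_{_{\partial M}}u\rangle dS_g$. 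Finally, writing $\langle\nabla f,\nabla u\rangle=\langle\nabla_{_{\partial M}}f,\nabla_{_{\partial M}}u\rangle+f_\nu u_\nu$ and $|\nabla u|^2=|\nabla_{_{\partial M}}u|^2+u_\nu^2$ in the leftover boundary integrals, the mixed terms $u_\nu\langle\nabla_{_{\partial M}}f,\nabla_{_{\partial M}}u\rangle$ and the spurious $f_\nu u_\nu^2$ cancel in pairs, leaving exactly $\int_{\partial M}f\big(2u_\nu\Delta_{_{\partial M}}u+Hu_\nu^2+h(\nabla_{_{\partial M}}u,\nabla_{_{\partial M}}u)\big)dS_g+\int_{\partial M}f_\nu|\nabla_{_{\partial M}}u|^2dS_g$. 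The only real care needed is to fix the sign conventions for $h$, $H$ and $\nu$ so that these cancellations reproduce the signs in the statement.
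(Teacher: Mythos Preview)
The paper does not give its own proof of this proposition; it is quoted from Qiu--Xia \cite{Qiu-Xia} and the reader is referred to \cite[Proposition~1]{DPR} for a detailed argument. Your derivation is correct and is in fact the standard route taken in those references: expand the left-hand side, isolate the classical weighted Reilly term $\int_M f\big((\Delta u)^2-|\nabla^2u|^2\big)dV_g$ via the Bochner identity written in divergence form, move derivatives from $u$ to $f$ by integration by parts, and then handle the $\kappa$-terms separately by two elementary integrations by parts on $\int_M fu\Delta u\,dV_g$. Your boundary bookkeeping (splitting $\Delta u=\Delta_{\partial M}u+Hu_\nu+\nabla^2u(\nu,\nu)$, using $\nabla^2u(X,\nu)=X(u_\nu)-h(X,\nabla_{\partial M}u)$, and integrating $\int_{\partial M}f\,\nabla_{\partial M}u(u_\nu)\,dS_g$ by parts on $\partial M$ to generate the second copy of $fu_\nu\Delta_{\partial M}u$) is exactly the computation that produces the cancellation of the mixed terms $u_\nu\langle\nabla_{\partial M}f,\nabla_{\partial M}u\rangle$ and $f_\nu u_\nu^2$. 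The only caveat, which you already flag, is the sign convention for $h$: the paper uses $h_{ab}=-\langle\nabla_{e_a}\nu,e_b\rangle$ in Section~\ref{Sec2}, opposite to the convention $h(X,Y)=\langle\nabla_X\nu,Y\rangle$ you used in deriving $\nabla^2u(X,\nu)=X(u_\nu)-h(X,\nabla_{\partial M}u)$; once a single convention is fixed, your argument reproduces the stated formula verbatim.
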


 We point out that by considering $\kappa=0$ and $f=1$ in Proposition \ref{prop-qui-xia}, we recover the classical Reilly’s formula. We also refer the reader to \cite[Proposition 1]{DPR} for a detailed proof. Next, we are going to establish a key lemma that will be fundamental in the proof of Theorem \ref{th1B}.

\begin{lemma}
\label{lemplk1}
Let $\big(M^{n},\,g \big)$ be a compact manifold with boundary $\partial M$. We assume that $f$ is a smooth function on $M^n$ satisfying $$f\mathring{Ric}=\mathring{\nabla}^2 f\,\,\,\,\,\,\,\,\,\hbox{and}\,\,\,\,\,\,\,\,\,\,f_{\mid_{\partial M}}=0.$$ Then
\begin{eqnarray}
\label{plk1}
\int_{\partial M}\frac{\partial f}{\partial \nu}\left(|\nabla_{_{\partial M}}\eta|^2 -(n-1)\kappa \eta^2 \right) dS_g &=& \frac{1}{n}\int_M[(n-1)\Delta f+Rf]g(\nabla u,\nabla u)dV_g\nonumber\\
&&-(n-1)\kappa\int_M(\Delta f+n\kappa f)u^2dV_g\nonumber\\
& &-\int_{M}f|\nabla^2 u+\kappa u g|^2 dV_{g}\nonumber\\
&&-2\int_Mf[Ric-(n-1)\kappa g](\nabla u, \nabla u) dV_g,
\end{eqnarray}  
where $\eta$ is any function on $\partial M$ and $u$ is a solution of
\begin{eqnarray}\label{The initial value problemAS}
\left\{\begin{array}{rc}
\Delta u +n\kappa u = 0 &\mbox{ in }\,\,\, M,\\
u= \eta & \mbox{ on }\,\,\, \partial M.
\end{array}\right.
\end{eqnarray}

\end{lemma}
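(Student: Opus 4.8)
The plan is to apply the generalized Reilly's formula (Proposition~\ref{prop-qui-xia}) to the given functions $f$ and $u$, where $u$ solves the boundary value problem \eqref{The initial value problemAS}, and then rearrange terms using the two hypotheses $f\mathring{Ric}=\mathring{\nabla}^2 f$ and $f_{|_{\partial M}}=0$. First I would substitute $\Delta u + n\kappa u = 0$ into Proposition~\ref{prop-qui-xia}: this kills the entire left-hand side term $\int_M f\big((\Delta u + \kappa n u)^2 - |\nabla^2 u + \kappa u g|^2\big)$ except for the piece $-\int_M f|\nabla^2 u + \kappa u g|^2\,dV_g$, which survives and becomes one of the four terms on the right-hand side of \eqref{plk1}. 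Next, on the boundary, since $f_{|_{\partial M}}=0$, all three boundary integrals carrying a factor of $f$ (the one with $\Delta_{\partial M}u$, mean curvature $H$, second fundamental form $h$, and the mixed $\kappa$-term) vanish identically; the only surviving boundary contribution is $\int_{\partial M}\frac{\partial f}{\partial\nu}\big(|\nabla_{\partial M}u|^2 - (n-1)\kappa u^2\big)\,dS_g$, which equals the left-hand side of \eqref{plk1} because $u=\eta$ on $\partial M$, hence $\nabla_{\partial M}u = \nabla_{\partial M}\eta$.

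After these simplifications, Proposition~\ref{prop-qui-xia} reads
\begin{eqnarray*}
-\int_M f|\nabla^2 u + \kappa u g|^2\,dV_g &=& (n-1)\kappa\int_M(\Delta f + n\kappa f)u^2\,dV_g\\
&&+\int_M\big(\nabla^2 f - (\Delta f)g - 2(n-1)\kappa f g + f Ric\big)(\nabla u,\nabla u)\,dV_g\\
&&+\int_{\partial M}\frac{\partial f}{\partial\nu}\big(|\nabla_{\partial M}\eta|^2 - (n-1)\kappa\eta^2\big)\,dS_g.
\end{eqnarray*}
Solving for the boundary term gives three of the four terms of \eqref{plk1} immediately; the remaining work is to rewrite the bulk integrand $\nabla^2 f - (\Delta f)g - 2(n-1)\kappa f g + f Ric$ in the desired form. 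Here I would use the hypothesis $f\mathring{Ric}=\mathring{\nabla}^2 f$, i.e. $\nabla^2 f - \frac{\Delta f}{n}g = f\big(Ric - \frac{R}{n}g\big)$, which yields $\nabla^2 f = f\,Ric - \frac{Rf}{n}g + \frac{\Delta f}{n}g$. Substituting this identity into the bulk integrand and collecting the pure-trace (multiple-of-$g$) terms separately from the $f\,Ric$ terms, one finds
\[
\nabla^2 f - (\Delta f)g - 2(n-1)\kappa f g + f Ric = 2f\,Ric - \frac{1}{n}\big[(n-1)\Delta f + Rf\big]g - 2(n-1)\kappa f g,
\]
and since $g(\nabla u,\nabla u)=|\nabla u|^2$, applying this tensor to $(\nabla u,\nabla u)$ splits precisely into the term $-\frac{1}{n}\int_M[(n-1)\Delta f + Rf]|\nabla u|^2\,dV_g$ and the term $-2\int_M f[Ric - (n-1)\kappa g](\nabla u,\nabla u)\,dV_g$ appearing in \eqref{plk1}. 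Moving the first term to the other side with the correct sign produces exactly the claimed identity.

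The main obstacle is not conceptual but bookkeeping: one must be careful with the signs when transposing terms across the equation (the left-hand side of \eqref{plk1} appears on the right of Proposition~\ref{prop-qui-xia}, so everything gets negated) and careful in the algebraic collapse of the pure-trace part of the bulk tensor, making sure the coefficients $\frac{n-1}{n}$ and $\frac{1}{n}$ in front of $\Delta f$ and $Rf$ come out correctly from combining $-(\Delta f)g$ with the $+\frac{\Delta f}{n}g$ and $-\frac{Rf}{n}g$ produced by the static equation. A secondary point worth checking is that the regularity of $u$ near $\partial M$ (where $f$ vanishes) is sufficient to legitimize the application of Proposition~\ref{prop-qui-xia}; since $f$ is smooth up to the boundary and $u$ solves a linear elliptic Dirichlet problem with smooth data, standard elliptic regularity suffices and no boundary blow-up issues arise because $f_{|_{\partial M}}=0$ suppresses precisely the terms that would otherwise require $u$ to higher order at $\partial M$.
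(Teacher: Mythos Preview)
Your proposal is correct and follows essentially the same route as the paper: apply Proposition~\ref{prop-qui-xia}, use $\Delta u+n\kappa u=0$ and $f_{|_{\partial M}}=0$ to kill the unwanted terms, and then substitute $\nabla^2 f = f\,Ric - \tfrac{Rf}{n}g + \tfrac{\Delta f}{n}g$ from the hypothesis $f\mathring{Ric}=\mathring{\nabla}^2 f$ to split the bulk integrand into the two stated pieces. Your algebraic bookkeeping of the trace part, yielding the coefficient $-\tfrac{1}{n}[(n-1)\Delta f+Rf]$, matches the paper's computation exactly.
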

\begin{proof}
By Proposition \ref{prop-qui-xia}, Eq. (\ref{The initial value problemAS}) and the fact that $f=0$ on $\partial M,$ one sees that
\begin{eqnarray*}
\int_{\partial M}\frac{\partial f}{\partial \nu}\left(|\nabla_{_{\partial M}}\eta|^2 -(n-1)\kappa \eta^2 \right) dS_g&=&-\int_{M}f|\nabla^2 u+\kappa u g|^2 dV_{g}\\
&&-(n-1)\kappa\int_M(\Delta f+n\kappa f)u^2dV_g\\
&&-\int_{M}\left(\nabla ^{2}f-(\Delta f)g+f Ric\right)(\nabla u, \nabla u)\,dV_g\\
&&+2(n-1)\kappa\int_M f |\nabla u|^2\,dV_g.
\end{eqnarray*}
Now, using $f\mathring{Ric}=\mathring{\nabla^2}f,$ we get
\begin{eqnarray*}
\int_{\partial M}\frac{\partial f}{\partial \nu}\left(|\nabla_{_{\partial M}}\eta|^2 -(n-1)\kappa \eta^2 \right) dS_g&=&-\int_{M}f|\nabla^2 u+\kappa u g|^2 dV_{g}\\
&&-(n-1)\kappa\int_M(\Delta f+n\kappa f)u^2dV_g\\
&&-\int_{M}\left(2fRic-\frac{R}{n}fg-\frac{n-1}{n}\Delta fg\right)(\nabla u, \nabla u)\,dV_g\\
&&+2(n-1)\kappa\int_M f |\nabla u|^2\,dV_g\\
 &=&\frac{1}{n}\int_M[(n-1)\Delta f+Rf]g(\nabla u,\nabla u)dV_g\nonumber\\
&&-(n-1)\kappa\int_M(\Delta f+n\kappa f)u^2dV_g\nonumber\\
& &-\int_{M}f|\nabla^2 u+\kappa u g|^2 dV_{g}\nonumber\\
&&-2\int_Mf[Ric-(n-1)\kappa g](\nabla u, \nabla u) dV_g,
\end{eqnarray*} as we wanted to prove.

\end{proof}

To conclude this section, we shall establish a lemma that will be fundamental in the proof of the boundary estimate involving the Brown-York mass. To do so, similar to \cite{DGR,Yuan}, we set the conformal metric $\overline{g}=v^{\frac{4}{n-2}}g$ with 
\begin{equation}\label{conformal}
v=(1+\alpha f)^{-\frac{n-2}{2}}\,\,\,\,\,\,\,\;\mathrm{and}\,\,\,\,\,\,\,\;\alpha^{-1}=\max_M \left(f^2+\frac{n(n-1)}{R_g}|\nabla f|^2\right)^{\frac{1}{2}}. 
\end{equation} With aid of these notations, we have the following lemma.

\begin{lemma}\label{lmby}
Let $(M^n,\,g,\,f,\,\rho)$, $n\geq 3,$ be a compact static perfect fluid space-time with positive scalar curvature and satisfying the dominant energy condition. Then the scalar curvature $R_{\overline{g}}$ with respect to the conformal metric $\overline{g}$ is non-negative. Moreover, $R_{\overline{g}}=0$ if and only if $\Delta_g f=-\frac{R_{g}}{n-1}f$ and $f^2+\frac{n(n-1)}{R_{g}}|\nabla f|^2$ is constant on $M^n$. 
\end{lemma}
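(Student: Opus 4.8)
The plan is to compute the scalar curvature $R_{\overline{g}}$ of the conformal metric $\overline{g}=v^{\frac{4}{n-2}}g$ directly using the classical conformal change formula, namely
\begin{equation*}
R_{\overline{g}}=v^{-\frac{n+2}{n-2}}\left(-\frac{4(n-1)}{n-2}\Delta_g v+R_g v\right),
\end{equation*}
and then to substitute $v=(1+\alpha f)^{-\frac{n-2}{2}}$. First I would carry out the routine computation of $\Delta_g v$ in terms of $\Delta_g f$ and $|\nabla f|^2$; writing $w=1+\alpha f$, one gets $\Delta_g v=-\tfrac{n-2}{2}\alpha w^{-\frac{n}{2}}\Delta_g f+\tfrac{n-2}{2}\cdot\tfrac{n}{2}\alpha^2 w^{-\frac{n+2}{2}}|\nabla f|^2$. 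Plugging this in and factoring out the common power of $w$, the bracket becomes a multiple of
\begin{equation*}
2(n-1)\alpha w\,\Delta_g f-n(n-1)\alpha^2|\nabla f|^2+R_g w^2,
\end{equation*}
so the whole problem reduces to showing this last quantity is non-negative on $M^n$.

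Next I would use the SPFST hypotheses to control $\Delta_g f$. By Definition~\ref{def-perfect-fluid} together with the dominant energy condition (which, as noted in the text, forces $R_g>0$ and gives $\rho\ge -\tfrac{R_g}{2}$ pointwise via $\tfrac{R_g}{2}\ge|\rho|$), one has
\begin{equation*}
\Delta_g f=\left(\frac{n-2}{2(n-1)}R_g+\frac{n}{n-1}\rho\right)f\ \ge\ \left(\frac{n-2}{2(n-1)}R_g-\frac{n}{2(n-1)}R_g\right)f=-\frac{R_g}{n-1}f.
\end{equation*}
Since $f\ge 0$, $\alpha>0$ and $w=1+\alpha f\ge 1>0$, this yields $2(n-1)\alpha w\,\Delta_g f\ge -2\alpha w R_g f$. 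Substituting into the displayed quantity, it is bounded below by
\begin{equation*}
-2\alpha w R_g f-n(n-1)\alpha^2|\nabla f|^2+R_g w^2=R_g\left(1-\alpha^2 f^2\right)-n(n-1)\alpha^2|\nabla f|^2=R_g-\alpha^2 R_g\left(f^2+\frac{n(n-1)}{R_g}|\nabla f|^2\right),
\end{equation*}
using $w^2-2\alpha w f=(1+\alpha f)^2-2\alpha f(1+\alpha f)=1-\alpha^2 f^2$. By the very choice of $\alpha$ in \eqref{conformal}, namely $\alpha^{-2}=\max_M\big(f^2+\tfrac{n(n-1)}{R_g}|\nabla f|^2\big)$, the parenthetical term is at most $\alpha^{-2}$, so the lower bound is $\ge R_g-\alpha^2 R_g\alpha^{-2}=0$. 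Hence $R_{\overline{g}}\ge 0$.

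For the equality case, I would trace back when each inequality is saturated. $R_{\overline{g}}=0$ at a point forces both $2(n-1)\alpha w\,\Delta_g f=-2\alpha w R_g f$ (equivalently $\Delta_g f=-\tfrac{R_g}{n-1}f$ there, i.e. $\rho=-\tfrac{R_g}{2}$) and $f^2+\tfrac{n(n-1)}{R_g}|\nabla f|^2=\alpha^{-2}$ there; conversely these two conditions make the lower bound chain an equality, giving $R_{\overline{g}}=0$. If $R_{\overline{g}}\equiv 0$ on $M^n$, then $\Delta_g f=-\tfrac{R_g}{n-1}f$ everywhere and $f^2+\tfrac{n(n-1)}{R_g}|\nabla f|^2\equiv\alpha^{-2}$ is constant, as claimed. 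I expect the main (though still mild) obstacle to be bookkeeping the conformal-change constants correctly and making sure the pointwise dominant-energy estimate $\rho\ge -\tfrac{R_g}{2}$ is applied with the right sign given $f\ge 0$; everything else is algebra and the definition of $\alpha$.
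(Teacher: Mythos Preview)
Your proposal is correct and follows essentially the same route as the paper: compute $\Delta_g v$ for $v=(1+\alpha f)^{-(n-2)/2}$, use the dominant energy condition to obtain $\Delta_g f\ge -\tfrac{R_g}{n-1}f$, and arrive at the pointwise bound $R_{\overline g}\ge R_g\big[1-\alpha^2\big(f^2+\tfrac{n(n-1)}{R_g}|\nabla f|^2\big)\big]\ge 0$ by the choice of $\alpha$. The equality analysis is identical, tracing back saturation of both inequalities to $\Delta_g f=-\tfrac{R_g}{n-1}f$ and $f^2+\tfrac{n(n-1)}{R_g}|\nabla f|^2\equiv\alpha^{-2}$.
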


\begin{proof} Initially, by using the conformal metric defined above, a direct computation yields
\begin{eqnarray}\nonumber
    \Delta_g v&=&\;\Delta[(1+\alpha f)^{-\frac{n-2}{2}}]\\\nonumber
    &=&\;{\rm div}\left[-\frac{n-2}{2}(1+\alpha f)^{-\frac{n}{2}}\alpha\nabla f\right]\\\label{eqq1}
    &=&\; -\frac{n-2}{2}\alpha(1+\alpha f)^{-\frac{n}{2}}\Delta f+\frac{n(n-2)}{4}\alpha^2 (1+\alpha f)^{-\frac{n+2}{2}}|\nabla f|^2.
\end{eqnarray}

On the other hand, it follows from Eq. \eqref{eqB} and the dominant energy condition that
\begin{eqnarray}
\label{eqiguld}
    \Delta f&=&\;\left(\frac{n-2}{2(n-1)}R_g+\frac{n}{n-1}\rho\right)f\nonumber\\
    &\geq&\;\left(\frac{n-2}{2(n-1)}R_g-\frac{n}{2(n-1)}R_g\right)f\nonumber\\
    &=&\; -\frac{R_g}{n-1}f.
\end{eqnarray} This combined with \eqref{eqq1} gives
\begin{eqnarray*}
    \Delta_g v&\leq &\frac{n-2}{2(n-1)}\alpha(1+\alpha f)^{-\frac{n}{2}}R_g f\nonumber\\
    & &+\frac{n(n-2)}{4}\alpha^2 (1+\alpha f)^{-\frac{n+2}{2}}|\nabla f|^2.
\end{eqnarray*} Rearranging the terms, one sees that

\begin{eqnarray}\label{eqq2}
    \Delta_g v\leq \frac{n(n-2)}{4}\alpha(1+\alpha f)^{-\frac{n+2}{2}}\left[\frac{2R_g}{n(n-1)}(1+\alpha f)f+\alpha|\nabla f|^2\right].
\end{eqnarray}

In another direction, it is well known, by the formulae for conformal metric, that 

\begin{eqnarray*}
    R_{\overline{g}}=v^{-\frac{n+2}{n-2}}\left(R_g v-4\frac{n-1}{n-2}\Delta_g v\right),
\end{eqnarray*} where $R_{\overline{g}}$ is the scalar curvature with respect to the conformal metric $\overline{g}.$ From this, one obtains that

\begin{eqnarray*}
    \Delta_g v&=&\;\frac{n-2}{4(n-1)}R_g v-\frac{n-2}{4(n-1)}v^{\frac{n+2}{n-2}}R_{\overline{g}}\\
    &=&\;\frac{n-2}{4(n-1)}R_g(1+\alpha f)^{-\frac{n-2}{2}}-\frac{n-2}{4(n-1)}((1+\alpha f)^{-\frac{n-2}{2}})^{\frac{n+2}{n-2}}R_{\overline{g}}\\
    &=&\;\frac{n-2}{4(n-1)}R_g (1+\alpha f)^{-\frac{n-2}{2}}-\frac{n-2}{4(n-1)}(1+\alpha f)^{-\frac{n+2}{2}}R_{\overline{g}}\\
    &=&\;\frac{n-2}{4(n-1)}(1+\alpha f)^{-\frac{n+2}{2}}\left[(1+\alpha f)^2R_g-R_{\overline{g}}\right].
\end{eqnarray*} Substituting this into \eqref{eqq2} and rearranging the terms, one deduces that

\begin{eqnarray*}
    R_g(1+\alpha f)^2-R_{\overline{g}}\leq n(n-1)\alpha \left[\frac{2 R_g}{n(n-1)}(1+\alpha f)f+\alpha|\nabla f|^2\right].
\end{eqnarray*} Of which, we obtain

\begin{eqnarray}\nonumber
    R_{\overline{g}}&\geq&\; R_g(1+\alpha f)^2-n(n-1)\alpha \left[\frac{2 R_g}{n(n-1)}(1+\alpha f)f+\alpha|\nabla f|^2\right]\\\nonumber
    &=&\;R_g\left[(1+\alpha f)^2-2\alpha(1+\alpha f)f-\frac{n(n-1)}{R_g}\alpha^2|\nabla f|^2\right]\\\label{eqq3}
    &=&\;R_g\left[1-\alpha^2\left(f^2+\frac{n(n-1)}{R_g}|\nabla f|^2\right)\right].
\end{eqnarray} Therefore, by using \eqref{eqq3} and the value chosen for $\alpha$ in \eqref{conformal}, one concludes that $R_{\overline{g}}\geq 0,$ as asserted.  

Finally, observe that $R_{\overline{g}}=0$ if and only if equality holds in \eqref{eqq3} and (\ref{eqiguld}). Consequently, $\Delta f=-\frac{R_g}{n-1}f$ and since $R_{g}>0,$ one sees that $f^2+\frac{n(n-1)}{R_g}|\nabla f|^2=\alpha^{-2}$ is constant over $M$, which in particular implies $\rho=-\frac{R_g}{2}$ and hence, $R_g$ is constant. So, the proof is finished.
\end{proof}

\section{Proof of the Main Results}

In this section, we shall present the proofs of Theorems \ref{th1B}, \ref{theo2} and \ref{theo2.1} and Corollaries \ref{cor1} and \ref{coro-BY}.

\subsection{Proof of Theorem \ref{th1B}}

\begin{proof}

To begin with, we have from  (\ref{eq-laplacian}) that 
\begin{equation}
\label{eqLamK}
\Delta f=-\Lambda f,
\end{equation} where $\Lambda=-\frac{(n-2)}{2(n-1)}R-\frac{n}{n-1}\rho.$ In particular, it follows from Remark \ref{remarkLambda} that $\Lambda>0.$ Now, choosing $u=f$ and $\kappa=\frac{\Lambda}{n},$ one obtains that
\begin{eqnarray*}
\left\{\begin{array}{rc}
\Delta u +n\kappa u = 0 &\mbox{ in } M,\\
u= 0 & \mbox{ on } \partial M.
\end{array}\right.
\end{eqnarray*} Hence, by using Lemma \ref{lemplk1}, one sees that
		\begin{eqnarray}\label{th1-est03}
		\int_Mf\Big|\nabla^2f+\frac{\Lambda}{n}fg\Big|^2dV_g&-&\frac{1}{n}\int_M Rf|\nabla f|^2dV_g
		-\Big(\frac{n-1}{n}\Big)\int_M(\Delta f)|\nabla f|^2dV_g\nonumber\\
		   &+&2\int_Mf\Big[Ric-\Big(\frac{n-1}{n}\Big)\Lambda g\Big](\nabla f,\nabla f)dV_g=0.
		\end{eqnarray}

	In another direction, by the classical Bochner's formula $$\frac{1}{2}\Delta |\nabla f|^2 = Ric(\nabla f, \nabla f)+|\nabla^2 f|^2 + \langle \nabla f,\,\nabla \Delta f\rangle,$$ one deduces that

		\begin{equation*}
		    2fRic(\nabla f,\nabla f)=f\Delta |\nabla f|^2-2f|\nabla^2f|^2+2\Lambda f|\nabla f|^2.
		\end{equation*} Upon integrating, one sees that
		\begin{eqnarray}\label{th1-est04}
		2\int_Mf\Big[Ric-\Big(\frac{n-1}{n}\Big)\Lambda g\Big](\nabla f,\nabla f)dV_g
		&=&\int_Mf\Delta|\nabla f|^2dV_g-2\int_Mf|\nabla^2f|^2dV_g\nonumber\\
		&&+\frac{2\Lambda}{n}\int_Mf|\nabla f|^2dV_g.
		\end{eqnarray} Also, by Green's identity and the fact that $f=0$ on $\partial M,$ it is easy to check that
	\begin{equation}\label{th1-est05}
	    \int_Mf\Delta |\nabla f|^2dV_g=\int_M|\nabla f|^2\Delta fdV_g+|\nabla f|^3_{|_{\partial M}}|\partial M|.
	\end{equation} Substituting (\ref{th1-est05}) into (\ref{th1-est04}) yields
\begin{eqnarray*}
2\int_M f\Big[Ric-\Big(\frac{n-1}{n}\Big)\Lambda g\Big](\nabla f, \nabla f)\,dV_g &=&\int_M|\nabla f|^2\Delta fdV_g+|\nabla f|^3_{|_{\partial M}}|\partial M|\nonumber
\\&&-2\int_Mf|\mathring{\nabla^2}f|^2\,dV_g-\frac{2}{n}\int_Mf(\Delta f)^2\,dV_g
\nonumber\\
 &&+\frac{2\Lambda}{n}\int_Mf|\nabla f|^2\,dV_g,
\end{eqnarray*} where we have used that $|\nabla^2f|^2=|\mathring{\nabla^2}f|^2+\frac{(\Delta f)^2}{n}.$ Using (\ref{th1-est03}), we then obtain

\begin{eqnarray}\label{th1-est07}
\int_Mf|\mathring{\nabla^2}f|^2dV_g&=&-\frac{1}{n}\int_M Rf|\nabla f|^2dV_g+\frac{1}{n}\int_M(\Delta f)|\nabla f|^2dV_g\nonumber\\
& &+|\nabla f|^3_{|_{\partial M}}|\partial M|
-\frac{2}{n}\int_Mf(\Delta f)^2dV_g+\frac{2\Lambda}{n}\int_M f|\nabla f|^2dV_g.
\end{eqnarray}

Proceeding, integrating by parts using (\ref{eqLamK}) and $f=0$ on $\partial M,$ one deduces that
\begin{equation}\label{th1-est07-1}
    \int_Mf(\Delta f)^2\,dV_g=-\Lambda\int_Mf^2\Delta f\,dV_g=2\Lambda\int_Mf|\nabla f|^2\,dV_g.
\end{equation} Plugging this together with (\ref{eqLamK}) into (\ref{th1-est07}) gives

\begin{eqnarray}\label{th1-est08}
\int_Mf|\mathring{\nabla^2}f|^2dV_g&=&-\frac{1}{n}\int_MfR|\nabla f|^2dV_g+|\nabla f|^3_{|_{\partial M}}|\partial M|\nonumber\\
& &-\frac{3\Lambda}{n}\int_Mf|\nabla f|^2dV_g.
\end{eqnarray} Since $-\Delta f>0$ (in the interior of $M$), by H\"older's inequality, one sees that

\begin{eqnarray*}
   \left(\int_M f \Delta f\,dV_g\right)^{2}&\leq & \int_{M}f^2 \left(-\Delta f\right)\,dV_{g}\,\int_{M}\left(-\Delta f\right)\,dV_{g}\nonumber\\&=& \int_{M}f^2 \left(-\Delta f\right)\,dV_{g}\,|\nabla f|_{|_{\partial M}}|\partial M|,
\end{eqnarray*} where we used the Stokes' formula. With aid of (\ref{th1-est07-1}), we infer

\begin{eqnarray}
\label{th1-est09}
   |\nabla f|_{|_{\partial M}}|\partial M|\, \int_Mf|\nabla f|^2dV_g&=&\frac{1}{2}|\nabla f|_{|_{\partial M}}|\partial M|\,\int_Mf^2(-\Delta f)dV_g\nonumber\\&\geq & \frac{1}{2}\Big(\int_M f\Delta f dV_g\Big)^2.
\end{eqnarray}

Again, by H\"older's inequality, we get
\begin{eqnarray*}
    \Big(\int_M \Delta fdV_g\Big)^2&\leq & \left(\int_{M}\Lambda dV_{g}\right)\,\left(\int_{M}f\big(-\Delta f\big)dV_{g}\right)\nonumber\\&=&\Lambda Vol(M)\left(\int_{M}f\big(-\Delta f\big)dV_{g}\right),
\end{eqnarray*} so that

\begin{equation}
\label{jk4tbn8}
\Lambda^2 Vol(M)^2 \left(\int_{M}f\big(\Delta f\big)dV_{g}\right)^2 \geq \Big(\int_M \Delta fdV_g\Big)^4= |\nabla f|^4_{|_{\partial M}}|\partial M|^4.
\end{equation}

Combining (\ref{jk4tbn8}) and (\ref{th1-est09}), one obtains that

\begin{equation*}
 2\Lambda^2Vol(M)^2   \int_Mf|\nabla f|^2dV_g\geq |\nabla f|^3_{|_{\partial M}}|\partial M|^3.
\end{equation*} Substituting this into (\ref{th1-est08}), one sees that

\begin{eqnarray*}
 0&\leq&\int_M f|\mathring{\nabla^2}f|^2dV_g\\
  &=&-\frac{1}{n}\int_M Rf|\nabla f|^2dV_g+|\nabla f|^3_{|_{\partial M}}|\partial M|-\frac{3\Lambda}{n}\int_M f|\nabla f|^2dV_g\\
  &\leq&-\frac{R_{min}+3\Lambda}{n}\int_M f|\nabla f|^2dV_g+|\nabla f|^3_{|_{\partial M}}|\partial M|\\
  &\leq&-\frac{(R_{min}+3\Lambda)|\nabla f|^3_{|_{\partial M}}|\partial M|^3}{2n\Lambda^2Vol(M)^2}+|\nabla f|^3_{|_{\partial M}}|\partial M|.
\end{eqnarray*} From this, it follows that
\begin{eqnarray*}
Vol(M)\geq\frac{1}{\Lambda}\sqrt{\frac{R_{min}+3\Lambda}{2n}}\,|\partial M|,
\end{eqnarray*} as asserted. 

To conclude, if the equality holds, then

\begin{equation*}
    \nabla^2f=-\frac{\Lambda}{n}fg.
\end{equation*} Moreover, since $ f=0$ on the boundary $\partial M$ ($f$ is constant on $\partial M$), we may use Theorem B  (II) of \cite{Reilly} to deduce that $(M^n,\,g)$ has constant sectional curvature $\frac{\Lambda}{n}$ and this forces $(M^n,\,g)$ to be an Einstein manifold. Thus, it suffices to invoke Proposition 1 of  \cite{SPFS} together with the fact that the boundary is totally geodesic in order to conclude that $(M^n,\,g)$ is isometric to the round hemisphere $\mathbb{S}^{n}_{+}.$ This finishes the proof of the theorem.
\end{proof}

\subsection{Proof of Corollary \ref{cor1}}

	\begin{proof}
	By Proposition 2 in \cite{SPFS}, the condition that $\big(M^{n},\,g,\,f,\,\rho\big)$ has positive constant scalar curvature $R$ implies that $\rho=-\frac{R}{2}$ and $\Lambda=\frac{R}{n-1}$ is constant. Therefore, Corollary \ref{cor1} follows from Theorem \ref{th1B}.
	\end{proof}

\subsection{Proof of Theorem \ref{theo2}}

\begin{proof}

 In the first part of the proof, we shall follow the arguments in \cite{DGR,Fang-Yuan,Yuan}. First of all, by (\ref{conformal}), we have 
 \begin{eqnarray}\label{conformal2}
\,\,\,\,\,\,\,\,\,\,v=(1+\alpha f)^{-\frac{n-2}{2}}\;\,\,\,\,\,\,\mathrm{and}\,\,\,\,\,\,\,\;\alpha^{-1}=\max_M \left(f^2+\frac{n(n-1)}{R_g}|\nabla f|^2\right)^{\frac{1}{2}}.  
\end{eqnarray} We now claim that the mean curvature $H_{\overline{g}}^{i}$ of $\partial M_i$ with respect to the conformal metric $\overline{g}=v^{\frac{4}{n-2}}g$ is strictly positive. Indeed, since $f|_{\partial M}=0,$ it follows from \eqref{conformal2} that $v\mid_{\partial M}=1.$ Of which, $\overline{g}=g$ over the boundary and $(\partial M_i,\overline{g})$ is isometric to $(\partial M_i,\,g)$, which by assumption can be isometrically embedded in $\mathbb{R}^n$ as a convex hypersurface with mean curvature $H_{0}^{i},$ induced by the Euclidian metric. Besides, taking into account that mean curvature of $\partial M_i$ with respect to the conformal metric $\overline{g}$ is given by
\begin{eqnarray}\label{eqq5}
    H_{\overline{g}}^{i}=v^{-\frac{2}{n-2}}\left(H_{g}^{i}+2\frac{n-1}{n-2}\partial_\nu(\log(v))\right),
\end{eqnarray} one obtains that 

\begin{eqnarray}
\label{eqq6}
    H_{\overline{g}}^{i}&=& \frac{2(n-1)}{n-2}\big\langle \nabla(1+\alpha f)^{-\frac{n-2}{2}},\,\nu\big\rangle\nonumber\\
    &=& (n-1)\alpha|\nabla f|_{|_{\partial M_i}},
\end{eqnarray} where we have used that $H_{g}^{i}=0$ and $\nu=-\frac{\nabla f}{|\nabla f|}.$ This proves that $H_{\overline{g}}^{i}>0,$ as claimed.

Proceeding, from \eqref{eqq6} we get

\begin{eqnarray}
\label{eqq7}
    \mathfrak{m}_{BY}(\partial M_i,\overline{g})&=& \int_{\partial M_i}(H_{0}^{i}-H_{\overline{g}}^{i}) dS_g\nonumber\\
    &=& \mathfrak{m}_{BY}(\partial M_i,g)-(n-1)\alpha|\nabla f|_{|_{\partial M_i}}|\partial M_i|.
\end{eqnarray} Hence, by Lemma \ref{lmby} we deduce that $R_{\overline{g}}\geq 0.$ Moreover, since $H_{\overline{g}}^{i}>0,$ we may invoke the Riemannian Positive Mass Theorem for the Brown-York mass \cite[Theorem 1.1]{Shi-Tam} (see also \cite[Theorem 1]{Yuan}) to conclude that $\mathfrak{m}_{BY}(\partial M_i,\overline{g})\geq 0.$ Consequently, 
\begin{eqnarray}
\label{eqq8}
    |\partial M_i|&\leq &\frac{1}{(n-1)\alpha|\nabla f|_{|_{\partial M_i}}}\mathfrak{m}_{BY}(\partial M_i,g)\nonumber\\
    &=&\frac{1}{(n-1)\alpha|\nabla f|_{|_{\partial M_i}}}\int_{\partial M_i}H_{0}^{i} dS_g,
\end{eqnarray} which proves \eqref{eqq4}.

Next, if the equality holds in \eqref{eqq8} for some component $\partial M_{i_0}$, then we necessarily have
\begin{eqnarray*}
    \mathfrak{m}_{BY}(\partial M_{i_0},\overline{g})=0.
\end{eqnarray*} So, by invoking the equality case in the Riemannian Positive Mass Theorem for the Brown-York mass, one deduces  that the conformal metric $\overline{g}$ is flat and consequently, $(M^n,\,\overline{g})$ is isometric to a bounded domain in $\mathbb{R}^n$. Besides, $R_{\overline{g}}=0$ and then it follows from Lemma \ref{lmby} that $\Delta_g f=-\frac{R_g}{n-1}f$ and $\frac{n(n-1)}{R_g}|\nabla f|^2+f^2$ is constant. In particular, $\rho=-\frac{R_g}{2}.$ But, in this case, $R_g$ must be constant over $M.$ Therefore, one obtains that
\begin{eqnarray*}
    0=\nabla\left[\frac{n(n-1)}{R_g}|\nabla f|^2+f^2\right]=\frac{2n(n-1)}{R_g}\nabla^2 f(\nabla f)+2f\nabla f
\end{eqnarray*}
so that
\begin{eqnarray*}
\nabla|\nabla f|^2-\frac{2\Delta_g f}{n}\nabla f=0.
\end{eqnarray*} Now, it suffices to use Lemma \ref{lemma1} to conclude that $|\mathring{Ric}|^2=0$ in $M.$ Hence, we may use Proposition 1 of \cite{SPFS} and the fact that $\partial M$ is totally geodesic in order to conclude that $(M^n,\,g)$ is isometric to $\mathbb{S}^{n}_{+}$.

On the other hand, if $(M^n,\,g)$ is isometric to $\mathbb{S}^{n}_{+},$ one deduces that $\partial M=\mathbb{S}^{n-1}.$ Thereby, the Brown-York mass of $\mathbb{S}^{n-1}$ is given by
\begin{eqnarray*}
	\mathfrak{m}_{BY}(\mathbb{S}^{n-1})=\int_{\mathbb{S}^{n-1}}(n-1) dS_{g_{\mathbb{S}^{n-1}}}=(n-1)\omega_{n-1},
\end{eqnarray*}  where $\omega_{n-1}$ is the volume of the round $(n-1)$-dimensional sphere. At the same time, since $\mathbb{S}_+^n$ has constant scalar curvature $R_g=n(n-1),$ it follows from (\ref{eqB}) that $\Delta_gf=-\frac{R_g}{n-1}f=-nf$ and consequently, $f^2+\frac{n(n-1)}{R_g}|\nabla f|^2$ is constant on $M.$ Indeed, a direct computation using (\ref{eqA}) yields
\begin{eqnarray*}
\nabla\left(f^2+\frac{n(n-1)}{R_g}|\nabla f|^2\right)&=&2f\nabla f+2\nabla^2f(\nabla f)\\
 &=&-\frac{2\Delta f}{n}\nabla f+2\nabla^2f(\nabla f)\\
 &=&2\mathring{\nabla^2}f(\nabla f)=2f\mathring{Ric}(\nabla f)=0.
\end{eqnarray*} Hence, one obtains that 
\begin{eqnarray*}
	\alpha^{-2}=\left.\left(f^2+\frac{n(n-1)}{R_g}|\nabla f|^2\right)\right|_{\partial M}=|\nabla f|^{2}_{|{\partial M}},
\end{eqnarray*} so that 

$$\alpha=\frac{1}{|\nabla f|_{|{\partial M}}}.$$ Of which, we arrive at 
\begin{eqnarray*}
	\frac{\mathfrak{m}_{BY}(\partial M,g)}{\alpha (n-1)|\nabla f|_{|_{\partial M}}}=\omega_{n-1}=|\partial M|,
\end{eqnarray*} which is the equality in \eqref{eqq8}. So, the proof is completed.
\end{proof}

\subsection{Proof of Corollary \ref{coro-BY}}
\begin{proof}
Initially, we assume that $(\partial M_i,\overline{g})$ and $(\partial M_i,g)$ are isometric. In this case, one has $R^{\partial M_i}_{\overline{g}}=R^{\partial M_i}_{g}$. By using the Gauss' equation for $\partial M_i$ as an embedded hypersurface of $\mathbb{R}^n,$ one obtains that
\begin{eqnarray}\label{by1}
	R^{\partial M_i}_{g}=(H_{0}^{i})^2-|h_i|^2=\frac{n-2}{n-1}(H_{0}^{i})^2-|\mathring{h}_i|^2,
\end{eqnarray}
where $h_i$ and $\mathring{h}_i$ stand for the second fundamental form and traceless second fundamental form of $\partial M_i$, respectively. Now, we use \eqref{eqq8} and the H\"older's inequality in order to infer
\begin{eqnarray}\nonumber
|\partial M_i|&\leq & \frac{1}{(n-1)^2\alpha^2|\nabla f|_{|_{\partial M_i}}^{2}}\int_{\partial M_i}(H_{0}^{i})^2 dS_g\\\label{by2}
 &=& \frac{1}{(n-1)(n-2)\alpha^2|\nabla f|_{|_{\partial M_i}}^{2}}\int_{\partial M_i}(R^{\partial M_i}_{g}+|\mathring{h}_i|^2) dS_g,
\end{eqnarray} where we have used (\ref{by1}).

Clearly, if equality holds in \eqref{by2}, then \eqref{eqq8} also becomes an equality. Hence, one concludes that $(M^n,\,g)$ is isometric to the round hemisphere $\Bbb{S}^{n}_{+}.$

On the other hand, if $M$ is isometric to $\mathbb{S}^{n}_{+}$ with standard metric, then $R^{\partial M}_{g}=(n-2)(n-1)$ and $\mathring{h}_i=0.$ Furthermore, one has $$\big(f^2+\frac{n(n-1)}{R_g}|\nabla f|^2\big)\mid_{_{\partial M}}=|\nabla f|^{2}_{|_{\partial M}}$$ and consequently, we obtain
\begin{eqnarray*}
	\frac{1}{(n-1)(n-2)\alpha^2|\nabla f|_{|_{\partial M_i}}^{2}}\int_{\partial M_i}(R^{\partial M_i}_{g}+|\mathring{h}_i|^2) dS_g=\omega_{n-1}=|\partial M|,
\end{eqnarray*} which gives the equality in \eqref{by2}. This concludes the proof of the corollary.
\end{proof}

\subsection{Proof of Theorem \ref{theo2.1}}
\begin{proof}
To begin with, since $\partial M$ is totally geodesic, a direct computation using \eqref{eq-egvle} and the Stokes' formula yields 
\begin{eqnarray*}
\lambda_1\int_{\partial M}\varphi^2 dS_g\leq-\int_{\partial M}\varphi J(\varphi) dS_g=\int_{\partial M}|\nabla^{\partial M}\varphi|^2 dS_g-\int_{\partial M}R_{nn} \varphi^2 dS_g,
\end{eqnarray*} for all $\varphi\in C^{\infty}(\partial M).$ In particular, choosing  $\varphi\equiv 1$ and using \eqref{eqe1}, one deduces that

\begin{eqnarray}\label{eq-last}
\lambda_1|\partial M|\leq-\int_{\partial M}R_{nn} dS_g=-\int_{\partial M}\frac{R-R^{\partial M}}{2} dS_g.
\end{eqnarray}

From the assumption that the boundary is Einstein and $Ric^{\partial M}>0,$ one sees that $$Ric^{\partial M}= (n-2)\varepsilon,$$ where $\varepsilon=\frac{R^{\partial M}}{(n-1)(n-2)}.$ Therefore, we may invoke the Bonnet-Myers' theorem to infer that $$diam_{g_{\partial M}}(\partial M)\leq\frac{\pi}{\sqrt{\varepsilon}}.$$ At the same time, by Bishop-Gromov's theorem we have
\begin{eqnarray}\label{Bishop}
    Vol(B_{\frac{\pi}{\sqrt\varepsilon}}^{\partial M})\leq Vol(\mathbb{S}^{n-1}_{g_{\varepsilon}}),
\end{eqnarray}
where $g_{\varepsilon}=\frac{1}{\varepsilon}g_{\mathbb{S}^{n-1}}$. Next, since $|\partial M|\leq Vol(B_{\frac{\pi}{\sqrt{\varepsilon}}}^{\partial M})(p)$, for any point $p\in\partial M$ and $Vol(\mathbb{S}^{n-1}_{g_\varepsilon})=\varepsilon^{-\frac{n-1}{2}}\omega_{n-1},$ we then obtain
\begin{eqnarray}\label{Bishop2}
    |\partial M|^{\frac{2}{n-1}}\leq\frac{1}{\varepsilon}(\omega_{n-1})^{\frac{2}{n-1}},
\end{eqnarray} where $\omega_{n-1}$ denotes the volume of the standard unit sphere $\mathbb{S}^{n-1}.$ Substituting the expression of $\varepsilon$ into (\ref{Bishop2}), we arrive at
\begin{eqnarray*}
(\omega_{n-1})^{\frac{2}{n-1}}\geq \frac{R^{\partial M}}{(n-1)(n-2)}|\partial M|^{\frac{2}{n-1}},
\end{eqnarray*}
which can be rewritten as
\begin{eqnarray*}
R^{\partial M}\leq (n-1)(n-2) (\omega_{n-1})^{\frac{2}{n-1}}|\partial M|^{-\frac{2}{n-1}}.
\end{eqnarray*} Thereby, it suffices to use \eqref{eq-last} to infer

\begin{equation}\label{eq-eigen}
\lambda_1\leq- \frac{1}{2}\left(R_{*}-(n-1)(n-2)(\omega_{n-1})^{\frac{2}{n-1}}|\partial M|^{-\frac{2}{n-1}}\right),
\end{equation} where $R_{*}=\min\{R(p);\,p\in\partial M\}.$ This proves the asserted inequality.

To conclude, by assuming that equality occurs in \eqref{eq-eigen}, it is easy to check that \eqref{Bishop} and \eqref{Bishop2} are also equalities. Thus, it follows from the equality case for the Bishop-Gromov's theorem that the boundary is isometric to a round sphere $\mathbb{S}^{n-1}.$ So, the proof is finished.  
\end{proof}


\begin{thebibliography}{BB}


\bibitem{Ambrozio} Ambrozio, L.: On static three-manifolds with positive scalar curvature. {\it J. Diff. Geom.} 107 (2017) 1--45.

\bibitem{Andrade-Melo} Andrade, M. and Melo, A.: Some characterizations of compact Einstein-type manifolds. ArXiv:2204.12439v1 [math.DG] (2022).


\bibitem{BLP} Barboza, M., Leandro, B. and Pina, R.: Invariant solutions for the Einstein field equation. {\it J. Math. Phys.} 59 (2018) 062501.

\bibitem{Baltazar-Diogenes-Ribeiro} Baltazar, H., Di\'ogenes, R. and Ribeiro Jr., E.: Isoperimetric inequality and Weitzenboöck type formula for critical metrics of the volume. {\it Israel J. Math.} 234 (2019) 309-329.

\bibitem{BDR} Barros, A., Di\'ogenes, R. and Ribeiro Jr., E.: Bach-Flat critical metrics of the volume functional on 4-dimensional manifolds with boundary. {\it J. Geom. Anal.} 25 (2015) 2698-2715.


\bibitem{BS} Barros, A. and da Silva, A.: Rigidity for critical metrics of the volume functional. {\it Math. Nachr.} 291 (2019) 709-719.

\bibitem{Batista-Diogenes-Ranieri-Ribeiro} Batista, R., Di\'ogenes, R., Ranieri, M. and Ribeiro Jr., E.: Critical metrics of the volume functional on compact three-manifolds with smooth boundary. {\it J. Geom. Analysis.} 27 (2017) 1530-1547.

\bibitem{Besse} Besse, A. L.: Einstein manifolds. Springer-Verlag, Ergebnisse der Mathematik und ihrer Grenzgebiete (3), 10, Berlin, (1987).

\bibitem{BM} Borghini, S. and Mazzieri, L.: On the mass of static metrics with positive cosmological constant - I. {\it Class. Quantum Grav.} 35 (2018) 125001.

\bibitem{BCM} Borghini, S., Chru\'sciel, P. and Mazzieri, L.: On the uniqueness of Schwarzchild-de Sitter spacetime. {\it Arch Rational Mech. Anal.} 247, 22 (2023).

\bibitem{BGH} Boucher, W., Gibbons, G. and Horowitz, G.: Uniqueness theorem for anti-de Sitter spacetime. {\it Phys. Rev. D.} (3) 30 (1984) 2447-2451.

\bibitem{BG} Boucher, W. and Gibbons, G.: Cosmic baldness, in The very early universe. (Cambridge, 1982), 273-278, Cambridge Univ. Press, Cambridge (1983). ArXiv:1109.3535 [astro-ph.CO].

\bibitem{CSW} Case, J., Shu, Y. -S. and Wei, G.: Rigidity of quasi-Einstein metrics. {\it Differ. Geom. Appl.} 29 (2011) 93--100.

\bibitem{Corvino-Eichmair-Miao} Corvino, J., Eichmair, M. and Miao, P.: Deformation of scalar curvature and volume. {\it Math. Ann.} 357 (2013) 551-584.

\bibitem{CLR} Coutinho, F., Leandro, B. and Reis, H.: On the fluid ball conjecture. {\it Ann. Glob. Anal. Geom.} 60 (2021) 455--468.

\bibitem{SPFS} Coutinho, F., Diógenes, R., Leandro, B. and Ribeiro Jr, E.: Static perfect fluid space-time on compact manifolds. {\it Class. Quantum Grav.} 37 (2020) 015003.

\bibitem{compact} Di\'ogenes, R. and Gadelha, T.: Compact quasi-Einstein manifolds with boundary. {\it Math. Nachr.} 295 (2022) 1690-1708.

\bibitem{DPR} Di\'ogenes, R., Pinheiro, N. and Ribeiro Jr, E.: Integral and boundary estimates for critical metrics of the volume functional. To appear in {\it Israel J. Math.}, arXiv:2207.12344 [math.DG] (2022).

\bibitem{DGR} Di\'ogenes, R., Gadelha, T. and Ribeiro Jr, E.: Geometry of compact quasi-Einstein Manifolds. {\it Manuscripta Math.} 169 (2022) 167-183.

\bibitem{DGRPAMS} Di\'ogenes, R., Gadelha, T. and Ribeiro Jr, E.: Remarks on compact quasi-Einstein manifolds with boundary. {\it Proc. Amer. Math. Soc.} 150 (2022) 351-363.

\bibitem{Fang-Yuan} Fang, Y. and Yuan, W.: Brown-York mass and positive scalar curvature II: Besse’s conjecture and related problems. {\it Ann. Global Anal. Geom.} 56 (2019), no. 1, 1-15. 

\bibitem{GHP} Gibbons, G. W., Hartnoll, S. A. and Pope, C. N.: Bohm and Einstein-Sasaki metrics, black holes, and cosmological event horizons. {\it Phys. Rev. D}. 67 (8) (2003) 084024.


\bibitem{hpw1} He, C., Petersen, P. and Wylie, W.: On the classification of warped product Einstein metrics. {\it Comm. Anal. Geom.} 20 (2012) 271--311.

\bibitem{hpw2} He, C., Petersen, P. and Wylie, W.: Warped product Einstein metrics over spaces with constant scalar curvature. {\it Asian J. Math.} 18 (2014) 159--190.

\bibitem{kk} Kim, D. -S. and Kim, Y. H.: Compact Einstein warped product spaces with nonpositive scalar curvature. {\it Proc. Amer. Math. Soc.} 131 (2003) 2573--2576.

\bibitem{Kobayashi} Kobayashi, O.: A differential equation arising from scalar curvature function. {\it J. Math. Soc. Japan} 34 (1982) 665-75.

\bibitem{Kobayashi-Obata} Kobayashi, O. and Obata, M.: Conformally-flatness and static space-time Manifolds and Lie Groups. (Progress in Mathematics vol 14) (Boston, MA: Brikh\"auser) (1981) 197-206.

\bibitem{KM} Kwong, K. and Miao, P.: A functional inequality on the boundary of static manifolds. {\it Asian J. Math.} (4) 21 (2017) 687--696.

\bibitem{Hawking} Hawking, S.W. and Ellis, G.F.R.: The large scale structure of spacetime. Cambridge University Press, Cambridge (1973).

\bibitem{lafontaine} Lafontaine, J.: Sur la géomérie d’une généralisation de l’équation différentielle d’Obata. {\it J. Math. Pures Appl.} 62 (1983) 63–72.

\bibitem{Leandro-Pina-Ribeiro} Leandro, B.; Pina, H. and Ribeiro Jr, E.: Volume growth for geodesic balls of static vacuum space on 3-manifolds. {\it Ann. Mat. Pura App.} 199 (2020) 863-873.

\bibitem{Lohkamp} Lohkamp, J.: The higher dimensional positive mass theorem II. ArXiv:1612.07505 (2016).

\bibitem{Masood} Masood-ul-Alam,  A.: On the spherical symmetry of static perfect fluid spacetimes and the positive-mass theorem. {\it Class. Quantum Grav.}  4 (1987) 625--633. 

\bibitem{Miao-Tam} Miao, P. and Tam, L.-F.: Einstein and conformally flat critical metrics of the volume functional. {\it Trans. Americ. Math. Soc.} 363, 6 (2011) 2907-2937.

\bibitem{Miao-Tam-N.-Q} Miao, P., Tam, L.-F. and Xie, N.-Q.: Critical points of Wang-Yau quasi-local energy. {\it Ann. Henri Poincar\'e.} (5) 12 (2011) 987–1017.

\bibitem{Martin} Martin, D. and Visser, M.: Algorithmic construction of static perfect fluid spheres. {\it Phys. Rev. D} 69 (2004) 104028.

\bibitem{Oliynyk} Oliynyk, T.: Future stability of the FLRW fluid solutions in the presence of a positive cosmological constant. {\it Commun. Math. Phys.} 346 (2016) 293-312.

\bibitem{O'Neil} O’Neill, B.: Semi-Riemannian Geometry with Applications to Relativity. Academic Press, Cambridge (1983).

\bibitem{jiewei} Qing, J. and Yuan, W.: A note on vacuum statice spaces and related problems. {\it J. Geom. Phys.} 74 (2013) 18–27.

\bibitem{Qiu-Xia} Qiu, G. and Xia, C.: A generalization of Reilly’s formula and its applications to a new Heintze-Karcher type inequality. {\it Int. Math. Res. Not. IMRN}. 17 (2015) 7608-7619.

\bibitem{Reilly1977} Reilly, R.: Applications of the Hessian operator in a Riemannian manifold. {\it Indiana Univ. Math. J.} 26 (1977) 459-472.

\bibitem{Reilly} Reilly, R. C.: Geometric applications of the solvability of Neumann problems on a Riemannian manifold. {\it Arch. Rational Mech. Anal.} (1) 75 (1980)  23-29.

\bibitem{Ros} Ros, A.: Compact hypersurfaces with constant higher order mean curvatures. {\it Revista Matem\'atica Iberoamericana}. 3 (1987) 447-453.

\bibitem{SY1} Schoen, R. and Yau, S.-T.: On the proof of positive mass conjecture in general relativity. {\it Commun. Math. Phys.} 65 (1979) 45-76.  

\bibitem{SY2} Schoen, R. and Yau, S.-T.: Proof of the positive mass theorem II. {\it Commun. Math. Phys.} 79 (1981) 231--260. 

\bibitem{SY3} Schoen, R. and Yau, S.-T.: Positive scalar curvature and minimal hypersurface singularities. {\it Surv. Differ. Geom.}, 2019. Differential geometry, Calabi-Yau theory, and general relativity. Part 2, 441--480, Surv. Differ. Geom., 24, Int. Press, Boston, MA, (2022).

\bibitem{Shen-FM} Shen, Y.: A note on Fischer–Marsden’s conjecture. {\it Proc. Amer. Math. Soc.} 125 (1997) 901--905.

\bibitem{Shi-Tam} Shi, Y. and Tam, L.-F.: Positive mass theorem and the boundary behaviors of compact manifolds with nonnegative scalar curvature. {\it J. Diff. Geom.} 62 (2002), no. 1, 79--125.

\bibitem{Xia} Xia, C.: A Minkowski type inequality in space forms. {\it Calc. Var. PDE.} (2016) 55:96.	

\bibitem{Yuan} Yuan, W.: Brown-York mass and positive scalar curvature I - First eigenvalue problem and its applications. {\it Proc. Amer. Math. Soc.} 151 (2023) 313--326. 

\bibitem{Yuan2} Yuan, W.: Volume comparison with respect to scalar curvature. {\it Analysis and PDE}. 16 (2023) 1--34. 

\bibitem{Wald} Wald, R.: General Relativity. {\it Chicago, IL: Chicago University Press} (2010).

	\end{thebibliography}
\end{document}